\DeclareFontFamily{OT1}{pzc}{}
\DeclareFontShape{OT1}{pzc}{m}{it}{<-> s * [1.2] pzcmi7t}{}
\DeclareMathAlphabet{\mathpzc}{OT1}{pzc}{m}{it}
\numberwithin{equation}{section}
\newcommand{\Div}[0]{\mathbf{div}\,}
\newcommand{\bs}[1]{\boldsymbol{#1}}
\newcommand{\Sig}[0]{\bs{\sigma}}
\newcommand{\Tau}[0]{\bs{\tau}}
\newcommand{\Eps}[0]{\bs{\varepsilon}}
\newcommand{\U}[0]{\bs{u}}
\newcommand{\V}[0]{\bs{v}}
\newcommand{\W}[0]{\bs{w}}
\newcommand{\bZ}[0]{\bs{z}}
\newcommand{\F}[0]{\bs{f}}
\newcommand{\Z}[0]{\bs{0}}
\newcommand{\N}[0]{\bs{n}}
\newcommand{\bQ}[0]{\bs{q}}
\newcommand{\T}[0]{\bs{t}}
\newcommand{\X}[0]{\bs{x}}
\newcommand{\dx}[0]{\,\mathrm{d}x}
\newcommand{\dy}[0]{\,\mathrm{d}y}
\newcommand{\jump}[1]{\left\llbracket #1 \right\rrbracket}
\newcommand{\mean}[1]{\left\{ \hspace{-0.12cm} \left\{ #1 \right\} \hspace{-0.12cm} \right\}}
\newcommand{\dual}[1]{\left\langle #1 \right\rangle}
\newcommand{\Bf}[0]{\mathcal{B}}
\newcommand{\Lf}[0]{\mathcal{L}}
\newcommand{\Sf}[0]{\mathcal{S}_h}
\newcommand{\vertiii}[1]{{\left\vert\kern-0.25ex\left\vert\kern-0.25ex\left\vert #1 
    \right\vert\kern-0.25ex\right\vert\kern-0.25ex\right\vert}}
    \newcommand{\enorm}[1]{\vertiii{#1}}
\newcommand{\Ch}[0]{\mathcal{C}_h}
\newcommand{\Eh}[0]{\mathcal{E}_h}
\newcommand{\Gh}[0]{\mathcal{G}_h}
\newcommand{\Nh}[0]{\mathcal{N}_h}
\newcommand{\Gcap}[0]{\mathcal{G}_h^{12}}
\newcommand{\sinu}{\sigma_{n}(\U_i)}
\newcommand{\sihnu}{\sigma_{i,n}(\U_{i,h})}
\newcommand{\sihnv}{\sigma_{i,n}(\V_{i,h})}
\newcommand{\sihnw}{\sigma_{i,n}(\W_{i,h})}
\newcommand{\un}{u_n}
\newcommand{\wn}{w_n}
\newcommand{\uhn}{u_{h,n}}
\newcommand{\vhn}{v_{h,n}}
\newcommand{\vn}{v_n}
\newcommand{\sonehnu}{\sigma_{1,n}(\U_{1,h})}
\newcommand{\stwohnu}{\sigma_{2,n}(\U_{2,h})}
\newcommand{\sonehnv}{\sigma_{1,n}(\V_{1,h})}
\newcommand{\Q}{H^{\frac12}(\Gamma)}
\newcommand{\mQ}{H^{-\frac12}(\Gamma)}
 \newcommand{\situ}{\Sig_{i,t}(\U_i)} 
  \newtheorem{thm}{Theorem}
  \newcommand{\h}[0]{\mathpzc{h}}
 \newcommand{\bV}{\bm V}
\newtheorem{lem}{Lemma}
\newtheorem{prob}{Problem}
\newtheorem{rem}{Remark}
\newtheorem{nitsche}{Nitsche formulation}
\numberwithin{equation}{section} 
\numberwithin{rem}{section} 
\numberwithin{lem}{section} 
\numberwithin{thm}{section}
\title{On Nitsche's method for   elastic contact problems}
\author{Tom Gustafsson}
\address{Department of Mathematics and Systems Analysis, Aalto University, 00076 Aalto, Finland}
\email{tom.gustafsson@alumni.aalto.fi.}
\author{Rolf Stenberg}
\address{Department of Mathematics and Systems Analysis, Aalto University, 00076 Aalto, Finland}
\email{rolf.stenberg@aalto.fi}
\author{Juha Videman}
\address{CAMGSD/Departamento de Matem\'atica, Universidade de Lisboa, Universidade de Lisboa, 1049-001 Lisbon, Portugal}
\email{jvideman@math.tecnico.ulisboa.pt}
\thanks{The financial support from the Portuguese government through FCT (Funda\c{c}\~ao para a Ci\^encia e a Tecnologia), I.P., under the projects PTDC/MAT-PUR/28686/2017 and UTAP-EXPL/MAT/0017/2017, is gratefully acknowledged.}
\begin{document}

\maketitle

\begin{abstract}
 We show quasi-optimality and a posteriori error estimates for the frictionless contact problem between two elastic bodies with a zero-gap function. The analysis is based on interpreting Nitsche's method as a stabilised finite element method for which the error estimates can be obtained with minimal regularity assumptions and without the saturation assumption. We present three different Nitsche's mortaring techniques for the contact boundary each corresponding to a different stabilising term. Our numerical experiments show the robustness of Nitsche's method and corroborates the efficiency of the a posteriori error estimators.
\end{abstract}
 
\section{introduction}

In this paper, we analyse the Nitsche method for elastic contact problems. Over the last decade, this method has been studied by a number of authors, see, e.g., \cite{CH13,french-survey,CFHPR18,ChoulyRabii}, and shown to be a robust and  efficient method. The advantages are an easy implementation based on the displacement variables only and, when compared to mixed methods with Lagrange multipliers, the absence of  an "inf-sup" stability condition which renders a symmetric positive definite system instead of one with  a saddle point structure.

From a theoretical point of view, the previously mentioned works suffer from two shortcomings. First, for the problem posed in $H^1$, the solution is typically assumed to be in $H^s$, with $s>3/2$. Second, the a posteriori error analyses are often based on a non-rigorous saturation assumption.

We have addressed these issues in our recent articles, cf. \cite{GSV17,GSV17a}. Our approach dates back to \cite{Stenberg1995} where different ways to enforce weakly the Dirichlet boundary conditions were discussed in the context of the so called stabilised mixed methods \cite{BaHu1,BaHu2}  wherein the bilinear form of the original mixed finite element method is augmented with a properly weighted residual term to ensure stability. In \cite{Stenberg1995}, it was shown that the local elimination of the Lagrange multiplier leads essentially to a method introduced by Nitsche in the early age of the finite element analysis \cite{Nitsche}. Since Nitsche's method is straightforward both to analyse (under the additional smoothness assumption) and to implement, we started to advocate it, in particular for contact problems, cf. \cite{Stenberg1998,BHS}.

What we have realised recently is that one should take full advantage of the relation between Nitsche's and stabilised method when analysing the former. In fact, we were able to get rid of both the smoothness and the saturation assumption for the membrane obstacle problem in \cite{GSV17}. In this paper, we will continue on this path and perform an error analysis, both quasi-optimality and a posteriori, for a simplified two-body contact problem without friction.  Besides the theoretical improvements, we present three versions of the Nitsche's method where the changes in the material parameters between the bodies are taken into account. The simplest is a typical "master-slave" approach where the contact surface of the stiffer body is chosen as the master part and the slave surface is then mortared by the Nitsche's technique. In the two other variants, the material parameters appear as weights in the Nitsche formulation so that the methods decide by themselves which part is the master and which is the slave. In order to simplify the notation, analysis and implementation of the adaptive methods, we assume that the elastic bodies are initially in full contact, see, {\sl e.g.}, \cite{HW05}, and leave the case with a non-vanishing initial gap between the elastic bodies for a future work.

Although our analysis is built upon  our  earlier works, cf. \cite{GSV17,GSVmortar}, we will present proofs of all the main theorems. 
We also note that the elastic contact problem literature is vast and therefore we only refer to the review paper \cite{W11}, and to all the references therein, for the analysis and application of finite element methods arising from mixed formulations and to \cite{KVW15,chouly2017residual}, and to all the references therein, for the a posteriori error analyses of contact problems.  We end the paper by presenting results of our computational experiments.

\section{The contact problem}

Let $\Omega_i \subset \mathbb{R}^d$, $i=1,2$, $d\in\{2,3\}$, denote two elastic bodies in their reference configuration and assume that the bodies are initially in contact.
Moreover,  assume that $\Omega_i $ are polygonal (polyhedral) domains and denote by  \mbox{$\Gamma = \partial \Omega_1 \cap \partial
\Omega_2$} their common boundary.
The boundary $\partial \Omega_i $  is split into three disjoint sets
$ \Gamma_{D,i}, \Gamma_{N,i}$ and $\Gamma_{C,i}$, with $ \Gamma_{D,i} $ denoting the part where homogeneous Dirichlet data is given, $ \Gamma_{N,i} $ the part  with a Neumann boundary condition and $\Gamma_{C,i} $ the part where contact can occur, see Figure~\ref{fig:twobodynot}.

Letting $\U_i : \Omega_i \rightarrow
\mathbb{R}^d$, $i = 1,2$, be the displacement of the body $\Omega_i$, the
infinitesimal strain tensor is defined as
\begin{equation}
    \Eps(\U_i) = \frac12\Big(\nabla \U_i + (\nabla \U_i)^T\Big).
\end{equation}
We assume  homogenous isotropic bodies and a plain strain problem in the two dimensional case. The stress tensor is thus given by
\begin{equation}
    \Sig_i(\U_i) = 2 \mu_i\,\Eps(\U_i) + \lambda_i\,\mathrm{tr}\,\Eps(\U_i) \boldsymbol{I},
\end{equation}
where $\mu_i>0$ is the shear modulus and $\lambda_i$ the second Lam\'e parameter of the body
$\Omega_i$ and $\boldsymbol{I}$ denotes the $d$-dimensional identity tensor.   We will exclude the possibility that the materials are nearly incompressible and hence it holds 
$\lambda_i \lesssim \mu_i$. (For nearly incompressible materials the standard approach of reformulating the problem in mixed form \cite{MR1115205} should be used.)

By $\N_i \in \mathbb{R}^d$ we denote the outward unit normal to
$\partial \Omega_i$, and define $\N =  \N_1 =- \N_2$.  In what follows, $\T$ denotes any unit vector that satisfies $\N \cdot \T = 0$.

We decompose the traction vector on $\partial \Omega_i$,   $\Sig_i(\U_i)\N_i $, into its normal and tangential parts, viz.
\begin{equation}
  \Sig_i(\U_i)\N_i =\Sig_{i,n}(\U_i)+\Sig_{i,t}(\U_i).
  \end{equation}
For the scalar normal  tractions we  use the sign convention
\begin{equation}
\sigma_{1,n}(\U_1) = \Sig_{1,n}(\U_1)\cdot \N_1 ,
\end{equation}
and
\begin{equation}
\sigma_{2,n}(\U_2) = -\Sig_{2,n}(\U_2)\cdot \N_2,
\end{equation}
and note that on $\Gamma$ these tractions are either both zero or continuous and compressive, i.e. it holds that
\begin{equation}
\sigma_{1,n}(\U_1)=\sigma_{2,n}(\U_2), \quad \sigma_{i,n}(\U_i) \leq 0, \ i=1,2. 
\end{equation}

The physical non-penetration constraint on $\Gamma$  reads as 
\begin{equation}
      \U_1 \cdot \N_1 + \U_2 \cdot \N_2 \leq 0,
\end{equation}
which, defining 
\begin{equation}
\un= -( \U_1 \cdot \N_1 + \U_2 \cdot \N_2)
\end{equation}
can be written as 
\begin{equation}
\jump{\un}\geq 0,
\end{equation}
where $\jump{\cdot }$ denotes the jump over $\Gamma$.

We thus have the following problem.

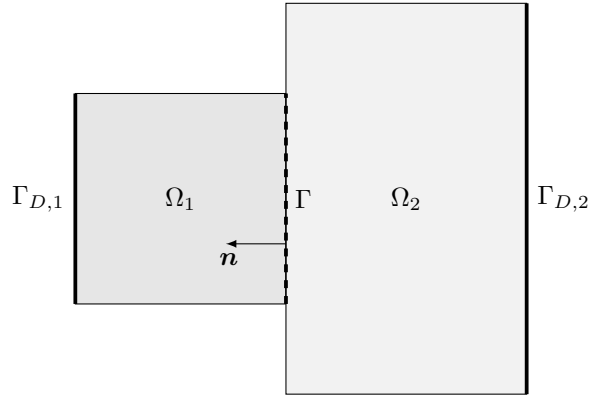
\begin{figure}[h!]
    \centering
    \begin{tikzpicture}[scale=0.8]
        \draw[fill=gray!20] (0.5, 0.5) rectangle (4, 4) node[pos=.5] {$\Omega_1$};
        \draw[fill=gray!10] (4, -1) rectangle (8, 5.5) node[pos=.5] {$\Omega_2$};
        \draw (4, 0.5) -- (4, 4) node[midway, anchor=west] {$\Gamma$};
        \draw[line width=0.5mm] (0.5, 0.5) -- (0.5, 4) node[midway, anchor=east] {$\Gamma_{D,1}$};
        \draw[line width=0.5mm] (8, -1) -- (8, 5.5) node[midway, anchor=west] {$\Gamma_{D,2}$};
        \draw[line width=0.5mm, dashed] (4, 0.5) -- (4, 4);
        \draw[-latex] (4, 1.5) -- (3, 1.5) node[pos=0.95, anchor=north] {$\boldsymbol{n}$};
    \end{tikzpicture}
    \caption{Notation for the elastic contact problem.}
    \label{fig:twobodynot}
\end{figure}

\begin{prob}[Strong formulation]
    Find $\U_i : \Omega_i \rightarrow \mathbb{R}^d$, $i = 1,2$, $d\in\{2,3\}$, such that
 \begin{equation}  \label{strong}
  \begin{aligned} 
        -\Div \Sig_i(\U_i)&=\F_i \quad && \text{in $\Omega_i$,} \\
        \U_i&=\Z \quad && \text{on $\Gamma_{D,i}$,} \\
        \Sig_i(\U_i) \N_i &=\Z \quad && \text{on $\Gamma_{N,i}$,} \\
        \situ &= \Z && \text{on $\Gamma$,} \\
       \sigma_{1,n}(\U_1) - \sigma_{2,n}(\U_2)&=0 && \text{on $\Gamma$,} \\
        \jump{\un}   &\geq 0 && \text{on $\Gamma$,} \\
    \sigma_{i,n}(\U_i)&\leq 0 && \text{on $\Gamma$,} \\
          \jump{\un}  \sigma_{i,n}(\U_i) &= 0 && \text{on $\Gamma$,}
    \end{aligned}
    \end{equation}
    \label{prob:strong}
    where $\F_i\in  [L^2(\Omega_i)]^d$ denotes the volume force on $\Omega_i$. 
\end{prob}
Letting $\lambda =- {\sigma_{1,n}(\U_1)}=-{\sigma_{2,n}(\U_2)}$ denote a Lagrange multiplier associated with the contact constraint,
we obtain an equivalent mixed formulation in which the normal traction on the contact surface is an independent unknown.
\begin{prob}[Mixed formulation]
    Find $\U_i : \Omega_i \rightarrow \mathbb{R}^d$, $i = 1,2$, $d\in\{2,3\}$, and $\lambda : \Gamma \rightarrow \mathbb{R}$, such that
 \begin{equation}  \label{mixedstrong}
  \begin{aligned} 
        -\Div \Sig_i(\U_i)&=\F_i \quad && \text{in $\Omega_i$,} \\
        \U_i&=\Z \quad && \text{on $\Gamma_{D,i}$,} \\
        \Sig_i(\U_i) \N_i &=\Z \quad && \text{on $\Gamma_{N,i}$,} \\
        \situ &= \Z && \text{on $\Gamma$,}\\
      \lambda + {\sigma_{1,n}(\U_1)}    &=0,  &&\text{on $\Gamma$,} 
            \\
      \lambda +  {\sigma_{2,n}(\U_2)    }  &=0,  &&\text{on $\Gamma$,} 
       \\
        \jump{\un}   &\geq 0 && \text{on $\Gamma$,} \\
       \lambda&\geq  0 && \text{on $\Gamma$,} \\
       \jump{\un} \lambda &= 0 && \text{on $\Gamma$.}
    \end{aligned}
    \end{equation}
    \label{prob:strong}
     
\end{prob}

To present a variational formulation for Problem~\ref{prob:strong},
we introduce function spaces for the displacements 
\begin{equation}
   \bV_i=\{\W_i   \in[H^1(\Omega_i)]^d : \W_i |_{\Gamma_{D,i}} = \Z\}   , 
\end{equation}
and equip them with the usual norms $\|\cdot\|_{1,\Omega_i}$. Moreover, we write $\bV = \bV_1 \times \bV_2$ and assume that
$\Gamma$ is a compact subset of $\partial \Omega_i \setminus \Gamma_{D,i}$ for  $i=1,2$.  
Thus the normal components of the displacement traces on the contact zone are in
  $H^{\frac12}(\Gamma)$  with the intrinsic norm in   $H^{\frac12}(\Gamma)$  defined by (cf., e.g., \cite{tartar})
\begin{equation}
    \|w\|_{\frac12,\Gamma}^2 = \|w\|_{0,\Gamma}^2 + \int_\Gamma \int_\Gamma \frac{|w(x) - w(y)|^2}{|x-y|^d} \dx \dy.
\end{equation}

The inequality constraint on $\Gamma$ is imposed by the  Lagrange multiplier which
belongs to $\mQ$, the topological dual of $\Q$, i.e.~$\mQ = \Q^\prime$. The duality pairing   is denoted by 
$\dual{\cdot,\cdot} : \Q \times\mQ \rightarrow \mathbb{R}$, and the norm is then
 \begin{equation}
    \|\xi\|_{-\frac12, \Gamma}  = \sup_{w \in W} \frac{\langle w, \xi \rangle}{\|w\|_{\frac12,\Gamma}}.
    \end{equation}
Moreover, we define the positive part of $\mQ$ as
\begin{equation}\label{pospart}
    \varLambda=\{\xi \in \mQ : \dual{ w, \xi} \geq 0 ~~ \forall w \in \Q,~w \geq 0 ~\text{a.e.~on $\Gamma$}\}
\end{equation}
and introduce the  bilinear and linear forms  
\begin{equation}
    \Bf(\W,\xi; \V,\eta)  = \sum_{i=1}^2 (\Sig_i(\W_i),\Eps(\V_i))_{\Omega_i} - \dual{\jump{\vn}  , \xi} - \dual{\jump{\wn} , \eta},
    \end{equation}
    and
    \begin{equation}
    \Lf(\V) = \sum_{i=1}^2 (\F_i,\V_i)_{\Omega_i}.
\end{equation}
The variational problem now reads as follows:
\begin{prob}[Weak formulation]\label{weakform}
    Find $(\U,\lambda) \in \bV \times \varLambda$ such that
    \begin{equation}
        \Bf(\U,\lambda; \V,\eta-\lambda) \leq \Lf(\V) \quad \forall (\V,\eta) \in \bV \times \varLambda.
        \label{weakform}
    \end{equation}
\end{prob}
We refer to \cite{HHNL88,HHN96} for the derivation of weak formulation from Problem~\ref{prob:strong} and for the proof of existence and uniqueness of solutions to 
problem \eqref{weakform}.

\section{Finite element method}

Let the bodies $\Omega_i \subset \mathbb{R}^d$ be separately divided into sets of
non-overlapping simplices $\Ch^i$, $i=1,2$.  The $d-1$ dimensional facets of
the elements in $\Ch^i$ are further divided into the set of interior facets $\Eh^i$,
the set of facets on the contact boundary $\Gh^i$, and the set of facets on the Neumann boundary $\Nh^i$.
We denote by $\Gcap$ the boundary mesh on $\Gamma$
which is obtained by intersecting the facets of $\Gh^1$ and $\Gh^2$.
In particular, each $E \in \Gcap$ corresponds to a pair $(E_1, E_2) \in \Gh^1
\times \Gh^2$ such that $E = E_1 \cap E_2$.
The finite element subspaces are
\begin{align}
    \bV_{i,h} &= \{ \V_{i,h} \in \bV_i : \V_{i,h}|_K \in [P_p(K)]^d~\forall K \in \Ch^i  \}, \\
    \bV_h &= \bV_{1,h} \times \bV_{2,h},\\
    Q_h &= \{ \eta_h \in \mQ : \eta_h|_E \in P_p(E)~\forall E \in \Gcap  \},
\end{align}
where $P_p(K)$ denotes the polynomials of degree $p$ on $K$.
Moreover, we introduce a subset of $\varLambda$,  denoted by $\varLambda_h $, as the positive part of $Q_h$, i.e.
\begin{equation}
    \varLambda_h = \{ \eta_h \in Q_h : \eta_h \geq 0 \} .
\end{equation}
Now, defining a stabilised bilinear form $\Bf_h$ through
\begin{equation}
    \Bf_h(\W_h,\xi_h;\V_h,\eta_h) = \Bf(\W_h,\xi_h;\V_h,\eta_h) - \alpha \Sf(\W_h,\xi_h;\V_h,\eta_h),
\end{equation}
where $\alpha > 0$ is a stabilisation parameter and
\begin{equation}
    \Sf(\W_h,\xi_h;\V_h,\eta_h) = \sum_{i=1}^2 \sum_{E \in \Gh^i} \frac{h_E}{\mu_i}\Big(\xi_h + \sihnw, \eta_h + \sihnv\Big)_E,
    \label{stabterm}
\end{equation}
we arrive at the following finite element formulation which is an extension of the mortar method introduced in 
\cite{JS2012,GSVmortar}.

\begin{prob}[Stabilised discrete formulation]
    Find $(\U_h,\lambda_h) \in \bV_h \times \varLambda_h$ such that
    \begin{equation}\label{VIh}
        \Bf_h(\U_h,\lambda_h; \V_h,\eta_h-\lambda_h) \leq \Lf(\V_h) \quad \forall (\V_h,\eta_h) \in \bV_h \times \varLambda_h.
    \end{equation}
    \label{prob:stab}
\end{prob}

We will now derive an equivalent formulation wherein the Lagrange multiplier is not explicitly present.
To this end, we start by defining $L^2(\Gamma)$-functions $\h_i$  through
\begin{equation}
\h_i|_E = h_E\quad \forall E \in \Gh^i, \ i=1,2,
\end{equation}
 and introduce the  notation
\begin{equation}
    \mean{\sigma_n(\U_h)} = \frac{\h_1\mu_2}{\h_1\mu_2 + \h_2\mu_1}\, \sonehnu + \frac{\h_2\mu_1}{\h_1\mu_2 + \h_2\mu_1}\,\stwohnu,
\end{equation}
i.e.  a convex combination of the discrete normal tractions. Furthermore, we let
\begin{equation}\label{lh}
l_h(\U_h)= -  \mean{\sigma_n(\U_h)}  - \beta_h \jump{\uhn} ,
\end{equation}
 where 
\begin{equation}
    \beta_h = \frac{\mu_1 \mu_2}{\alpha(\h_1\mu_2 + \h_2 \mu_1)}.
\end{equation}
Next, we will show that the discrete Lagrange multiplier $\lambda_h$ can be eliminated  locally (i.e. element by element). This leads to a Nitsche formulation with the displacements as sole unknowns.
Choosing $\V_h=\Z$ in the variational inequality \eqref{VIh},
 gives
\begin{equation}
  - \dual{\jump{\uhn} ,\eta_h-\lambda_h }
-\alpha  \sum_{i=1}^2 \sum_{E \in \Gh^i} \frac{h_E}{\mu_i}\big(\lambda_h + \sihnu, \eta_h -\lambda_h)_E \leq 0,
\end{equation}
which, in view of the notation defined above, can be written as
  \begin{equation}\label{discvi}
    \langle \lambda_h-l_h(\U_h), \eta_h-\lambda_h\rangle \leq 0\quad \forall \eta_h \in \Lambda_h.
    \end{equation}
   Let then $E\in  \Gcap$ be an element on which $\lambda_h\vert_E >0$ and 
 denote by  $\phi_E$ one of the basis functions of $Q_h \vert _E$. Moreover,  
   choose a test function $\eta_h$ in \eqref{discvi}  in such a way that it vanishes at $\Gamma\setminus E$ and  $\eta_h\vert _E =\lambda_h \pm  \epsilon \phi_E$, with $\epsilon >0$ chosen small enough so that $\eta_h\vert _E>0$. 
It follows that 
      \begin{equation}
   0= \langle \lambda_h-l_h(\U_h), \phi_E \rangle=  \int_E  \big(\lambda_h-l_h(\U_h)\big) \phi_E\, ds \end{equation}
and, since
   \begin{equation}
  \big( \lambda_h-l_h(\U_h)\big)\vert_E \in Q_h\vert_E,
   \end{equation}
   we conclude that 
    \begin{equation} \big( \lambda_h-l_h(\U_h)\big)\vert_E =0. 
    \end{equation}
    This shows that 
      \begin{equation} 
    \lambda_h =(l_h(\U_h))_+ \, ,
    \label{eq:lambdah}
    \end{equation}
  where
  $(a)_+ =
\max(0,a)$ denotes the positive part of $a$.

The discrete contact region, defined as
\begin{equation}
   \Gamma_c(\U_h)= \{\,\X\in \Gamma  :  \lambda_h(\X)>0 \, \},
\end{equation}
can now, in view of \eqref{eq:lambdah}, be written as 
\begin{equation}
    \Gamma_c(\U_h) = \{ \X \in \Gamma : l_h(\U_h(\X)) > 0 \}.
    \label{contactregion}
\end{equation}
On the other hand, testing with $\V_h$ in \eqref{VIh} and using \eqref{eq:lambdah} yields
\begin{equation}
\begin{aligned}
\sum_{i=1}^2 &(\Sig_i(\U_{i,h}),\Eps(\V_{i,h}))_{\Omega_i} - \dual{\jump{\vhn}  , (l_h(\U_h))_+ } 
\\
&
-\alpha \sum_{i=1}^2 \sum_{E \in \Gh^i} \frac{h_E}{\mu_i}\Big((l_h(\U_h))_+  + \sihnu, \ \sihnv\Big)_E
\\&= \sum_{i=1}^2 (\F_i,\V_{i,h})_{\Omega_i} \quad \forall \V_h \in \bV_h.
\end{aligned}
\label{nsystem}
\end{equation}
It follows from  \eqref{lh}  that 
\begin{equation}
\begin{aligned}
-&\dual{\jump{\vhn}  , (l_h(\U_h))_+ } 
\\
&=\Big(   \mean{\sigma_n(\U_h)}  ,  \jump{\vhn}   \Big)_{  \Gamma_c(\U_h)}+
\Big(     \beta_h \jump{\uhn}  , \jump{\vhn}    \Big)_{  \Gamma_c(\U_h)},
\end{aligned}
\end{equation}
and on $      \Gamma_c(\U_h) $ it holds that
\begin{align}
(l_h(\U_h))_+  +\sigma_{1,n}(\U_1)&= \frac{\h_2\mu_1}{\h_1\mu_2 + \h_2\mu_1}\big( \sigma_{1,n}(\U_1)- \sigma_{2,n}(\U_2)\big)   - \beta_h \jump{\uhn}, 
\\
(l_h(\U_h))_+  +\sigma_{2,n}(\U_2)&= \frac{\h_1\mu_2}{\h_1\mu_2 + \h_2\mu_1}\big( \sigma_{2,n}(\U_2)- \sigma_{1,n}(\U_2)\big)   - \beta_h \jump{\uhn}. 
\end{align}
Therefore, defining the jump
\begin{equation}
\jump{\sigma_n(\U_h)}=   \sigma_{2,n}(\U_1)- \sigma_{1,n}(\U_2),
\end{equation}
and the  $L^2(\Gamma)$-function
\begin{equation}
\gamma_h=\frac{\alpha \h_1\h_2}{\h_1 \mu_2 +\h_2 \mu_1},\end{equation}
 and substituting the above five expressions into \eqref{nsystem}, we obtain  after rearranging terms the following Nitsche's formulation for Problem \ref{prob:stab} with $\U_h$ as the sole unknown.
\begin{nitsche} 
    Find $\U_h \in \bV_h$ such that
   \begin{equation}
    \begin{aligned}
        &\sum_{i=1}^2 (\sigma_i(\U_{i,h}),\Eps(\V_{i,h}))_{\Omega_i} +\Big(     \beta_h \jump{\uhn}  , \jump{\vhn}    \Big)_{  \Gamma_c(\U_h)}\\
       & +\Big(   \mean{\sigma _n(\U_h)}  ,  \jump{\vhn}   \Big)_{  \Gamma_c(\U_h)}+  \Big(   \mean{\sigma_n(\V_h)}  ,  \jump{\uhn}   \Big)_{  \Gamma_c(\U_h)}   
       \\
       &-  \Big(  \gamma_h  \jump{\sigma_n(\U_h)}  ,   \jump{\sigma_n(\V_h)}  \Big)_{  \Gamma_c(\U_h)}  
       \\&
     -\alpha \sum_{i=1}^2 \Big(\frac{\h_i}{\mu_i}  \sihnu, \ \sihnv\Big) _{\Gamma \setminus  \Gamma_c(\U_h)}
        \\
        & = \sum_{i=1}^2 (\F_i,\V_{i,h})_{\Omega_i} \quad \forall \V_h \in \bV_h.
    \end{aligned}
    \end{equation}
\end{nitsche}
\begin{rem} Since $\sinu$ vanishes on $ \Gamma \setminus  \Gamma_c(\U_h) $, this set can be reinterpreted as being part of $\Gamma_{N,i}$, $i=1,2$. Consequently, 
the term $$ \alpha \sum_{i=1}^2 \Big(\frac{\h_i}{\mu_i}  \sihnu, \ \sihnv\Big) _{\Gamma \setminus  \Gamma_c(\U_h)}$$ can be dropped.  
\label{drop}
\end{rem}

Next we present two other variants of Nitsche's method. The first is the so called "master-slave" formulation. 

Assume that the material parameters satisfy $\mu_1\geq \mu_2$. The body $\Omega_1$ is the master part, $\Omega_2$ the slave, and the mortaring at the contact surface is only done for the latter, less rigid body, i.e. the stabilising   term  is now 
\begin{equation}
    \Sf(\W_h,\xi_h;\V_h,\eta_h) =   \sum_{E \in \Gh^2} \frac{h_E}{\mu_2}\Big(\xi_h + \sigma_{2,n}(\W_{2,h}), \eta_h + \sigma_{2,n}(\V_{2,h})\Big)_E.
\end{equation}
Repeating the steps  above, we obtain
$
    \lambda_h =(l_h(\U_h))_+ \, ,
$  
with
\begin{equation}\label{lh2}
l_h(\U_h)= -  \sigma_{2,n}(\U_{2,h}) - \frac{\mu_2}{\alpha \h_2} \jump{\uhn} .
\end{equation}
The contact region $ \Gamma_c(\U_h) $ is given by \eqref{contactregion}, with $l_h(\U_h)$ taken from \eqref{lh2},
 and we have the following method.
\begin{nitsche} 
    Find $\U_h \in \bV_h$ such that
   \begin{equation}
    \begin{aligned}
        &\sum_{i=1}^2 (\Sig_i(\U_{i,h}),\Eps(\V_{i,h}))_{\Omega_i} +\Big(     \frac{\mu_2}{\alpha \h_2} \jump{\uhn}  , \jump{\vhn}    \Big)_{  \Gamma_c(\U_h)
        }\\
       & +\Big(    \sigma_{2,n}(\U_{2,h})  ,  \jump{\vhn}   \Big)_{  \Gamma_c(\U_h)}+  \Big(  \sigma_{2,n}(\V_{2,h})  ,  \jump{\uhn}   \Big)_{  \Gamma_c(\U_h)}   
       \\
       &
     -\alpha   \Big(\frac{\h_2}{\mu_2}  \sigma_{2,n}(\U_{2,h}),  \sigma_{2,n}(\V_{2,h}) \Big) _{\Gamma \setminus  \Gamma_c(\U_h)}
        \\
        &\qquad = \sum_{i=1}^2 (\F_i,\V_{i,h})_{\Omega_i} \quad \forall \V_h \in \bV_h.
    \end{aligned}
    \end{equation}
        \end{nitsche}
 
 \noindent Again, the term
    $$
\alpha   \Big(\frac{\h_2}{\mu_2}  \sigma_{2,n}(\U_{2,h}),  \sigma_{2,n}(\V_{2,h}) \Big) _{\Gamma \setminus  \Gamma_c(\U_h)}
    $$
    can be dropped, see Remark \ref{drop}.

In the third alternative, we follow \cite{Juntunen2015} and define the stabilising term through
\begin{equation}
  \alpha   \Sf(\W_h,\xi_h;\V_h,\eta_h) =  \Big(\beta_h^{-1}(\xi_h +  \mean{\sigma _n(\W_h)} ), \eta_h +  \mean{\sigma _n(\V_h)} \Big)_\Gamma.
\end{equation}
Repeating once more the above computations,  we arrive at the following method.
\begin{nitsche} 
    Find $\U_h \in \bV_h$ such that
   \begin{equation}
    \begin{aligned}
        &\sum_{i=1}^2 (\Sig_i(\U_{i,h}),\Eps(\V_{i,h}))_{\Omega_i} +\Big(     \beta_h \jump{\uhn}  , \jump{\vhn}    \Big)_{  \Gamma_c(\U_h)}\\
       & +\Big(   \mean{\sigma_n(\U_h)}  ,  \jump{\vhn}   \Big)_{  \Gamma_c(\U_h)}+  \Big(   \mean{\sigma_n(\V_h)}  ,  \jump{\uhn}   \Big)_{  \Gamma_c(\U_h)}   
       \\
       &
 -   \Big(\beta_h^{-1}( \mean{\sigma _n(\U_h)} ),    \mean{\sigma _n(\V_h)} \Big) _{\Gamma \setminus  \Gamma_c(\U_h)}
            \\
        &\ = \sum_{i=1}^2 (\F_i,\V_{i,h})_{\Omega_i} \quad \forall \V_h \in \bV_h,
    \end{aligned}
    \end{equation}
    with $\Gamma_c(\U_h)$ given by \eqref{contactregion} $($and $l_h(\U_h)$ as in   \eqref{eq:lambdah}$)$.
      \end{nitsche}
  
  \noindent  Also here the term 
    \begin{equation}
    \Big(\beta_h^{-1}( \mean{\sigma _n(\U_h)} ),    \mean{\sigma _n(\V_h)} \Big) _{\Gamma \setminus  \Gamma_c(\U_h)}
    \label{dropterm}
    \end{equation}
    can be dropped.

\section{Error analysis}

The energy norm for the problem is
\begin{equation}
    \ \sum_{i=1}^2 (\Sig_i(\W_i), \Eps(\W_i))_{\Omega_i}.
    \end{equation}
  Since we  exclude nearly incompressible materials,  it holds $\lambda_i \lesssim \mu_i$, and hence 
with our choice of  boundary conditions  the Korn
   inequality is valid in both regions, and  we have the norm equivalence
   \begin{equation}
    \sum_{i=1}^2 (\Sig_i(\W_i), \Eps(\W_i))_{\Omega_i} \approx \sum_{i=1}^2\mu_i \Vert \W \Vert_{1,\Omega_i} ^2.
    \label{normequi}
    \end{equation}
    The error estimate will be given in the continuous norm
    \begin{equation}
    \enorm{(\W,\xi)}^2  = \sum_{i=1}^2\Big( \mu_i \Vert \W \Vert_{1,\Omega_i} ^2+ \frac{1}{\mu_i} \| \xi \|_{-\frac12, \Gamma}^2\Big)
    \end{equation} 
  but in the analysis we will also use the following mesh dependent norm
    \begin{equation}
    \enorm{(\W_h,\xi_h)}_h^2  = \enorm{(\W_h,\xi_h)}^2 + \sum_{i=1}^2 \sum_{E \in \Gh^i} \frac{h_E}{\mu_i} \| \xi_h \|_{0,E}^2.
\end{equation}
\begin{thm}[Continuous stability]
    For every $(\W,\xi) \in \bV \times Q$ there exists $\V \in \bV$ such that
    \begin{equation}
        \Bf(\W,\xi;\V,-\xi) \gtrsim \enorm{(\W,\xi)}^2
    \end{equation}
    and
    \begin{equation}
        \|\V\|_V \lesssim \enorm{(\W,\xi)}.
    \end{equation}
\end{thm}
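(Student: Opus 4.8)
The plan is to prove a continuous inf-sup condition of the form $\enorm{(\W,\xi)} \lesssim \sup_{\V\in\bV}\Bf(\W,\xi;\V,-\xi)/\|\V\|_V$ by exhibiting an explicit test displacement. Expanding the left-hand side, $\Bf(\W,\xi;\V,-\xi) = \sum_i(\Sig_i(\W_i),\Eps(\V_i))_{\Omega_i} - \dual{\jump{\vn},\xi} + \dual{\jump{\wn},\xi}$. The elastic part controls $\sum_i\mu_i\|\W\|_{1,\Omega_i}^2$ as soon as we take $\V$ close to $\W$ in each body, using the norm equivalence \eqref{normequi} and Korn; the term $\dual{\jump{\wn},\xi}$ already has the right sign and gives us something for free; the troublesome cross-term $-\dual{\jump{\vn},\xi}$ must be made to produce the missing $\sum_i\mu_i^{-1}\|\xi\|_{-1/2,\Gamma}^2$.

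First I would choose $\V = \W + \V_\xi$, where $\V_\xi \in \bV$ is a lifting of $\xi$ constructed as follows: by definition of the dual norm there is $w_\xi\in\Q$ with $\dual{w_\xi,\xi}\gtrsim\|\xi\|_{-1/2,\Gamma}\|w_\xi\|_{1/2,\Gamma}$ and one may normalise so that $\|w_\xi\|_{1/2,\Gamma}\approx \|\xi\|_{-1/2,\Gamma}$, hence $\dual{w_\xi,\xi}\gtrsim\|\xi\|_{-1/2,\Gamma}^2$. Using a bounded right inverse of the normal trace operator on, say, $\bV_1$ (recall $\Gamma$ is a compact subset of $\partial\Omega_1\setminus\Gamma_{D,1}$, so such an extension exists with $\|\V_\xi\|_{1,\Omega_1}\lesssim\|w_\xi\|_{1/2,\Gamma}$), and extending by zero into $\Omega_2$, one gets $\V_\xi\in\bV$ with $\jump{\V_\xi\cdot\N\text{-part}}$ giving $-\dual{\jump{v_{\xi,n}},\xi} \gtrsim \|\xi\|_{-1/2,\Gamma}^2$. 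To make the weighting by $\mu_i^{-1}$ come out, I would scale this lifting by $1/\mu_1$ (or split it using a partition that respects the two Lamé moduli), so that the contribution is $\gtrsim\sum_i\mu_i^{-1}\|\xi\|_{-1/2,\Gamma}^2$ while $\|\V_\xi\|_V\lesssim\mu_1^{-1}\|\xi\|_{-1/2,\Gamma}\lesssim\enorm{(\W,\xi)}$.

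Then I would estimate the cross terms produced by plugging $\V=\W+\V_\xi$: the elastic form gives $\sum_i(\Sig_i(\W_i),\Eps(\W_i))_{\Omega_i} + \sum_i(\Sig_i(\W_i),\Eps(\V_{\xi,i}))_{\Omega_i}$, the first term $\approx\sum_i\mu_i\|\W\|_{1,\Omega_i}^2$ and the second absorbed by Young's inequality against the $\|\xi\|_{-1/2}^2$-term at the cost of a small constant; similarly $\dual{\jump{w_n},\xi}$ is bounded by $\|\W\|_{1,\Omega}\,\mu^{-1/2}\cdot\mu^{1/2}\|\xi\|_{-1/2}$-type splitting and absorbed. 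Collecting, $\Bf(\W,\xi;\V,-\xi)\gtrsim\enorm{(\W,\xi)}^2$, and the continuity bound $\|\V\|_V\le\|\W\|_V+\|\V_\xi\|_V\lesssim\enorm{(\W,\xi)}$ is immediate. The main obstacle is the bookkeeping of the material weights $\mu_i$: one must choose the scaling of the normal-trace lifting so that it simultaneously produces the correctly weighted lower bound on $-\dual{\jump{v_n},\xi}$ and stays bounded by $\enorm{(\W,\xi)}$ in the $V$-norm, and one must check that the continuous dependence of the trace right-inverse does not hide a $\mu$-dependent constant — this is where \eqref{normequi} and the standing assumption $\lambda_i\lesssim\mu_i$ are essential.
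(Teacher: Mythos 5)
Your overall strategy --- perturb $\W$ by a lifting of $\xi$ chosen so that the duality term produces $\|\xi\|_{-\frac12,\Gamma}^2$, then absorb the cross terms by Young's inequality --- is the same skeleton as the paper's proof; the genuine difference is the construction of the lifting. The paper defines $\bQ_i\in\bV_i$ as the solution of the auxiliary elastic problem $(\Sig_i(\bQ_i),\Eps(\bZ_i))_{\Omega_i}=\dual{-\bZ_i\cdot\N_i,\xi}$ for all $\bZ_i\in\bV_i$ and takes $\V=\W-\bQ$. This energy--Riesz representative buys two things that your direct $H^{\frac12}(\Gamma)\to\bV_1$ trace lifting does not: the gain term is exactly $\dual{\jump{q_n},\xi}=\sum_i(\Sig_i(\bQ_i),\Eps(\bQ_i))_{\Omega_i}$, and the cross term $(\Sig_i(\W_i),\Eps(\bQ_i))_{\Omega_i}$ lives in the same energy inner product, so Cauchy--Schwarz plus Young closes the estimate without any auxiliary small parameter; moreover, since $\bQ$ has a component in \emph{each} body, the inf-sup condition \eqref{auxinfsup} applied in $\Omega_i$ gives $\mu_i^{-1}\|\xi\|_{-\frac12,\Gamma}^2\lesssim\mu_i\|\bQ_i\|_{1,\Omega_i}^2$ for $i=1,2$, so the full weight $(\mu_1^{-1}+\mu_2^{-1})\|\xi\|_{-\frac12,\Gamma}^2$ appears automatically.

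Three points in your version need repair before it is a proof. First, lifting $w_\xi$ only into $\Omega_1$ and scaling by $\mu_1^{-1}$ yields the gain $\mu_1^{-1}\|\xi\|_{-\frac12,\Gamma}^2$, which does \emph{not} control $(\mu_1^{-1}+\mu_2^{-1})\|\xi\|_{-\frac12,\Gamma}^2$ when $\mu_2\ll\mu_1$; you must lift into both bodies, weighting the lifting into $\Omega_i$ by $\mu_i^{-1}$ --- your parenthetical ``split it'' is not an option but the essential step. Second, the remark that $\dual{\jump{\wn},\xi}$ ``already has the right sign'' is false: here $\xi$ is an arbitrary element of $Q$, not of $\varLambda$, and $\jump{\wn}$ has no sign. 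Fortunately, with $\V=\W+\V_\xi$ this term cancels exactly against the $-\dual{\jump{\wn},\xi}$ contained in $-\dual{\jump{\vn},\xi}$, so nothing needs to be absorbed; but you should not rely on a sign that is not there. Third, the Young absorption as you describe it does not close: making the coefficient of $\sum_i\mu_i\|\W_i\|_{1,\Omega_i}^2$ small forces a large coefficient on $\sum_i\mu_i^{-1}\|\xi\|_{-\frac12,\Gamma}^2$, which cannot be absorbed by the gain of order one. The standard fix is to take $\V=\W+\delta\V_\xi$ with $\delta>0$ small, so that both halves of the cross term carry the factor $\delta$ while the coercive term $\sum_i(\Sig_i(\W_i),\Eps(\W_i))_{\Omega_i}$ does not. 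With these repairs your argument is correct and yields the same conclusion, at the price of bookkeeping that the paper's choice of lifting avoids.
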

\begin{proof}
It is well-known that the inf-sup condition 
\begin{equation}
\sup_{\bZ_i\in \bV_i} \frac{\langle - \bZ_{i}\cdot\N_i, \xi\rangle}{\Vert \nabla \bZ_i\Vert_{0,\Omega_i}}
\geq C_i \Vert   \xi   \Vert _{-\frac12, \Gamma} \qquad \forall \xi\in Q,
\label{auxinfsup}
\end{equation}
holds in  both subdomains $\Omega_i$ (cf. \cite{Babuska1973}).
Therefore \begin{equation}\label{continfsup}
\sup_{\bZ=(\bZ_1,\bZ_2)\in \bV} \frac{\langle \jump{z_n}, \xi\rangle}{(\sum_{i = 1}^2 \mu_i \|\nabla \bZ_i\|_{0,\Omega_i}^2)^{1/2}}
\geq C   \left(\frac{1}{\mu_1} + \frac{1}{\mu_2} \right)^{1/2} \|\xi\|_{-\frac12,\Gamma} \qquad \forall \xi\in Q\, .
\end{equation}
Assume then that $(\W,\xi)\in\bV \times  Q$ is given  and let $\V_i=\W_i-\bQ_i$  where $\bQ_i\in\bV_i$ solves the problem
\[
(\Sig_i(\bQ_i), \Eps(\bZ_i))_{\Omega_i} = \langle -\bZ_i\cdot\N_i,\xi\rangle\quad \forall \bZ_i\in \bV_i\, , \ \ i=1,2\,.
\]
Choosing $\bZ_i=\bQ_i$ above, we obtain after summing 

\[
\sum_{i=1}^2(\Sig_i(\bQ_i), \Eps(\bQ_i))_{\Omega_i} = \langle \llbracket q_n\rrbracket,\xi\rangle\, .
\]
Moreover, from \eqref{auxinfsup}, it follows that
\[
\Vert   \xi   \Vert _{-\frac12, \Gamma} \lesssim \sup_{\bZ_i\in \bV_i} \frac{\langle - \bZ_{i}\cdot\N_i, \xi\rangle}{\Vert \nabla \bZ_i\Vert_{0,\Omega_i}} =
\sup_{\bZ_i\in \bV_i} \frac{(\Sig_i(\bQ_i), \Eps(\bZ_i))_{\Omega_i}}{\Vert \nabla \bZ_i\Vert_{0,\Omega_i}} \lesssim  \mu_i\|\bQ_i\|_{1,\Omega_i}
\]
and thus 
\[
\left(\frac{1}{\mu_1} + \frac{1}{\mu_2} \right)^{1/2} \|\xi\|_{-\frac12,\Gamma} \lesssim \Big( \sum_{i=1}^2 \mu_i \Vert \bQ_i \Vert_{1,\Omega_i} ^2\Big)^{1/2} 
 \, .
\]
Now, it is easy to see that
\begin{align*}
       \Bf(\W,\xi;\V,-\xi) &= \sum_{i=1}^2 \Big\{(\Sig_i(\W_i), \Eps(\W_i))_{\Omega_i}-  (\Sig_i(\W_i), \Eps(\bQ_i))_{\Omega_i}\Big\}+\langle \llbracket q_n\rrbracket,\xi\rangle\\
     &  \gtrsim \sum_{i=1}^2 \mu_i \Vert \W_i \Vert_{1,\Omega_i}^2 -\frac{1}{2}\sum_{i=1}^2 \mu_i \Vert \W_i \Vert_{1,\Omega_i}^2 - \frac{1}{2}\sum_{i=1}^2 \mu_i \Vert \bQ_i \Vert_{1,\Omega_i}^2    \\
 & \quad  +   \sum_{i=1}^2 (\Sig_i(\bQ_i), \Eps(\bQ_i))_{\Omega_i} \\
   &  \gtrsim   \sum_{i=1}^2 \mu_i \Vert \W_i \Vert_{1,\Omega_i}^2 +
     \left(\frac{1}{\mu_1} + \frac{1}{\mu_2} \right)\|\xi\|_{-\frac12,\Gamma}^2 = \enorm{(\W,\xi)}^2
       \end{align*}
       and that $        \|\V\|_V = \|\W-\bQ\|_V \lesssim \enorm{(\W,\xi)}.$
\end{proof}

\noindent
Above and in the following we write $a\gtrsim b$ (or $a \lesssim b$) when $a \geq Cb$ (or $a\leq Cb$) for some positive constant $C$ independent of the finite element mesh.

To derive the discrete stability estimate, we need the following discrete trace inequality, easily shown by a scaling argument.
\begin{lem}[Discrete trace estimate]  
    There exists $C_I > 0$, independent of the mesh parameter $h$, such that
    \begin{equation}
        C_I \sum_{E \in \Gh^i} \frac{h_E}{\mu_i} \| \sihnv \|_{0,E}^2 \leq \mu_i\| \V_{i,h}\|_{1,\Omega_i}^2 \quad \forall \V_{i,h} \in \bV_i, \quad i = 1,2.
            \label{dtrace}
    \end{equation}
\end{lem}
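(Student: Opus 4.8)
The plan is to prove the discrete trace estimate \eqref{dtrace} by the standard scaling argument, working element by element on the contact boundary. First I would fix a facet $E \in \Gh^i$ with $E \subset \partial K$ for some simplex $K \in \Ch^i$, and recall that on a reference simplex $\hat K$ with reference facet $\hat E$, all norms on the finite-dimensional polynomial space $[P_p(\hat K)]^d$ are equivalent; in particular there is a constant depending only on $p$ and $d$ such that $\|\hat \Sig(\hat \V) \hat \N \cdot \hat\N\|_{0,\hat E}^2 \lesssim \|\nabla \hat \V\|_{0,\hat K}^2$, where the stress operator on the reference element uses the \emph{scaled} Lam\'e parameters. Since we have excluded nearly incompressible materials, $\lambda_i \lesssim \mu_i$, so the constant in this reference inequality can be taken independent of $\lambda_i/\mu_i$ (equivalently, one first divides the stress tensor by $2\mu_i$ and notes $\Sig_i(\V)/(2\mu_i) = \Eps(\V) + (\lambda_i/2\mu_i)\,\mathrm{tr}\,\Eps(\V)\bs I$ has all coefficients $O(1)$).

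The key step is then the affine change of variables $F_K : \hat K \to K$ with $F_K(\hat \X) = B_K \hat \X + b_K$. Under this map, for a shape-regular family one has $\|B_K\| \simeq h_K$, $\|B_K^{-1}\| \simeq h_K^{-1}$, $|\det B_K| \simeq h_K^d$, and the $(d-1)$-dimensional measure of $E$ scales like $h_K^{d-1}$. Tracking how the various pieces transform: the normal trace $\Sig_i(\V_{i,h})\N\cdot\N$ picks up one factor of $B_K^{-1}$ from the gradient, i.e. scales like $h_K^{-1}$ times the reference quantity, and its squared $L^2(E)$ norm therefore scales like $h_K^{d-1} \cdot h_K^{-2}\,\|\cdot\|_{0,\hat E}^2 = h_K^{d-3}\|\cdot\|_{0,\hat E}^2$; meanwhile $\|\nabla \V_{i,h}\|_{0,K}^2$ scales like $h_K^d \cdot h_K^{-2}\,\|\nabla\hat\V\|_{0,\hat K}^2 = h_K^{d-2}\|\nabla\hat\V\|_{0,\hat K}^2$. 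Multiplying the trace term by $h_E \simeq h_K$ exactly compensates the mismatch, so that $h_E\|\Sig_i(\V_{i,h})\N\cdot\N\|_{0,E}^2 \lesssim \|\nabla\V_{i,h}\|_{0,K}^2$ with a constant depending only on $p$, $d$ and the shape-regularity constant. Dividing by $\mu_i$ on both sides and summing over $E \in \Gh^i$ (noting each $K$ contributes at most $d+1$ facets) gives $\sum_{E\in\Gh^i} \frac{h_E}{\mu_i}\|\sihnv\|_{0,E}^2 \lesssim \frac{1}{\mu_i}\|\nabla\V_{i,h}\|_{0,\Omega_i}^2 \simeq \mu_i\|\V_{i,h}\|_{1,\Omega_i}^2$ — wait, the $\mu_i$ placement is: after dividing the reference inequality (with the $O(1)$-coefficient stress) by nothing, one multiplies back by $2\mu_i$ inside $\Sig_i$, producing $\mu_i^2$ on the left from squaring; dividing by $\mu_i$ leaves $\mu_i\|\nabla\V_{i,h}\|_{0,K}^2$ on the right after scaling, which by Korn \eqref{normequi} is comparable to $\mu_i\|\V_{i,h}\|_{1,\Omega_i}^2$ — hence the claimed bound with $C_I$ independent of $h$ and of $\mu_i$.

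I do not expect a genuine obstacle here; this is a textbook inverse/trace inequality and the only mild care needed is (i) making the $\mu_i$-dependence explicit, which is handled by factoring $2\mu_i$ out of $\Sig_i$ and using $\lambda_i \lesssim \mu_i$ so the residual tensor has bounded coefficients uniformly in the material parameters, and (ii) checking that $h_E \simeq h_K$ for the facet's parent simplex, which holds under the standing shape-regularity assumption on the families $\Ch^i$. One should also remark that on the contact boundary $\Gh^i$ the relevant trace is only the scalar normal component $\sigma_{i,n}(\V_{i,h}) = \Sig_i(\V_{i,h})\N\cdot\N$, which is a single component of the full traction and hence controlled by the same argument a fortiori. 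Everything else is routine bookkeeping of scaling exponents, so I would present only the reference-element inequality and the scaling identities, then assert the conclusion.
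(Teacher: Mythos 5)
Your proposal is correct and is exactly the route the paper intends: the paper gives no written proof, merely asserting the lemma is ``easily shown by a scaling argument,'' and your reference-element/affine-map bookkeeping (with the factor $2\mu_i$ pulled out of $\Sig_i$ and $\lambda_i\lesssim\mu_i$ ensuring a material-independent constant) is the standard way to carry that out. The only cosmetic remarks are that the appeal to the Korn equivalence \eqref{normequi} at the end is unnecessary, since $\|\nabla\V_{i,h}\|_{0,\Omega_i}\leq\|\V_{i,h}\|_{1,\Omega_i}$ suffices, and that the quantifier in the statement should be read as $\V_{i,h}\in\bV_{i,h}$, as you implicitly (and correctly) assume.
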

\begin{thm}[Discrete stability] Suppose that $0<\alpha < C_I$. Then,
    for every $(\W_h,\xi_h) \in \bV_h \times Q_h$, there exists $\V_h \in \bV_h$ such that
    \begin{equation}
        \Bf_h(\W_h,\xi_h;\V_h,-\xi_h) \gtrsim \enorm{(\W_h,\xi_h)}_h^2
    \end{equation}
    and
    \begin{equation}
        \|\V_h\|_V \lesssim \enorm{(\W_h,\xi_h)}_h.
    \end{equation}
\end{thm}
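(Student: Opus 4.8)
The plan is to mimic the continuous stability proof but to compensate for the extra stabilisation term $-\alpha \Sf$ using the discrete trace inequality \eqref{dtrace}. Given $(\W_h,\xi_h)\in\bV_h\times Q_h$, I would first apply Theorem~2.1 (continuous stability, restricted here to the discrete data), obtaining $\V_h^{(1)}\in\bV$ with $\Bf(\W_h,\xi_h;\V_h^{(1)},-\xi_h)\gtrsim \enorm{(\W_h,\xi_h)}^2$ and $\|\V_h^{(1)}\|_V\lesssim\enorm{(\W_h,\xi_h)}$; the one subtlety is that the $\bQ_i$ produced there solve a \emph{continuous} elasticity problem, so $\V_h^{(1)}=\W_h-\bQ$ need not be discrete. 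The standard remedy is to replace $\bQ_i$ by a Scott--Zhang or Clément-type interpolant $\bQ_{i,h}\in\bV_{i,h}$; since $\xi_h$ is piecewise polynomial one checks the inf-sup argument still goes through with a discrete test function, so one may simply assume $\V_h^{(1)}\in\bV_h$. (Alternatively, one invokes the discrete inf-sup condition for the chosen pair, which is where \cite{GSVmortar,JS2012} enter.)

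Next I would handle the missing mesh-dependent term $\sum_i\sum_{E\in\Gh^i}\frac{h_E}{\mu_i}\|\xi_h\|_{0,E}^2$ and the stabilisation term. Choose $\V_h^{(2)}\in\bV_h$ so that $\sigma_{i,n}(\V_{i,h}^{(2)})$ is, on each $E$, proportional to $\xi_h$ — concretely, take $\V_h^{(2)}$ realising a scaled version of the discrete trace bound, so that $-\dual{\jump{v_n^{(2)}},\xi_h}$ is negligible or controlled while $\sum_{i,E}\frac{h_E}{\mu_i}(\xi_h+\sigma_{i,n}(\W_{i,h}),\sigma_{i,n}(\V_{i,h}^{(2)}))_E$ contributes positively $\sim\sum_{i,E}\frac{h_E}{\mu_i}\|\xi_h\|_{0,E}^2$ after Young's inequality absorbs cross terms. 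More cleanly, one can compute $\Bf_h(\W_h,\xi_h;-\delta\W_h,0)$ or a similar explicit choice: $-\Sf(\W_h,\xi_h;\W_h,\xi_h)=-\sum_{i,E}\frac{h_E}{\mu_i}\|\xi_h+\sigma_{i,n}(\W_{i,h})\|_{0,E}^2$ which, expanded, gives $-\|\xi_h\|^2-2(\xi_h,\sigma_{i,n}(\W_{i,h}))-\|\sigma_{i,n}(\W_{i,h})\|^2$; so testing with $(\W_h,-\xi_h)$ in $\Bf_h$ already produces $+\alpha\sum\frac{h_E}{\mu_i}\|\xi_h\|_{0,E}^2$ plus a term $-\alpha\sum\frac{h_E}{\mu_i}\|\sigma_{i,n}(\W_{i,h})\|_{0,E}^2$, the latter bounded by $\tfrac{\alpha}{C_I}\sum_i\mu_i\|\W_{i,h}\|_{1,\Omega_i}^2$ via \eqref{dtrace}, and by hypothesis $\alpha<C_I$ this is strictly less than $\sum_i\mu_i\|\W_{i,h}\|^2_{1,\Omega_i}$, so it is absorbed into the coercive elasticity part coming from $\Bf$.

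So the construction is $\V_h=\V_h^{(1)}+\rho\,\W_h$ for a small fixed $\rho>0$: the $\Bf$-part gives the continuous-stability lower bound $\gtrsim\enorm{(\W_h,\xi_h)}^2$ from $\V_h^{(1)}$ plus a multiple of $\sum_i\mu_i\|\W_{i,h}\|^2_{1,\Omega_i}$ from $\rho\W_h$; the $-\alpha\Sf$-part, evaluated at $(\W_h,\xi_h;\V_h,-\xi_h)$, splits as $\rho\alpha\sum\frac{h_E}{\mu_i}\|\xi_h\|^2_{0,E}$ (the wanted mesh term, with a good sign because of the $-\xi_h$ slot against $\xi_h+\sigma_{i,n}(\W_{i,h})$ — signs need to be tracked carefully) minus lower-order cross terms handled by Young plus the $-\alpha\Sf$ evaluated against $\V_h^{(1)}$, for which $\sigma_{i,n}(\V^{(1)}_{i,h})$ is controlled by $\|\V^{(1)}_{i,h}\|_{1,\Omega_i}\lesssim\enorm{(\W_h,\xi_h)}$ through \eqref{dtrace} again. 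Choosing $\rho$ small enough (depending on $C_I-\alpha$ and the hidden constants) closes the estimate, and $\|\V_h\|_V\le\|\V_h^{(1)}\|_V+\rho\|\W_h\|_V\lesssim\enorm{(\W_h,\xi_h)}\le\enorm{(\W_h,\xi_h)}_h$. The main obstacle is bookkeeping: keeping the signs in the stabilisation term correct when the second argument is $-\xi_h$, and verifying that the constant $\alpha/C_I<1$ genuinely suffices to absorb the negative $\|\sigma_{i,n}(\W_{i,h})\|_{0,E}^2$ contributions without eating into the part of $\sum_i\mu_i\|\W_{i,h}\|^2_{1,\Omega_i}$ already spent bounding $(\Sig_i(\W_i),\Eps(\bQ_i))$ in the continuous step — one should first fix how much coercivity the continuous argument leaves over, then budget the rest against the trace terms.
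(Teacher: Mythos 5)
Your overall architecture coincides with the paper's: test first with $(\W_h,-\xi_h)$, note that the duality terms cancel and that
$-\alpha\Sf(\W_h,\xi_h;\W_h,-\xi_h)=\alpha\sum_{i=1}^2\sum_{E\in\Gh^i}\frac{h_E}{\mu_i}\big(\|\xi_h\|_{0,E}^2-\|\sihnw\|_{0,E}^2\big)$,
absorb the negative part with the discrete trace estimate \eqref{dtrace} and the hypothesis $\alpha<C_I$, and then add a small multiple of a second test function built from the continuous inf-sup condition to recover control of $\|\xi_h\|_{-\frac12,\Gamma}$. Your sign bookkeeping for the stabilisation term is correct, and your final absorption step with a small parameter $\rho$ matches the paper's choice $\W_h-\delta I_h\V$.

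The genuine gap is in how you bring the inf-sup test function into $\bV_h$. You cannot ``simply assume $\V_h^{(1)}\in\bV_h$'', and your fallback of invoking a discrete inf-sup condition for the pair $\bV_h\times Q_h$ is not available: such a condition fails in general for these spaces, and its absence is precisely why the method is stabilised in the first place. What is actually needed — and what the paper supplies as estimate \eqref{prop1}, citing Lemma 3.2 of \cite{GSV17} — is the quantitative statement that for the Cl\'ement interpolant $I_h\V$ of the continuous inf-sup function $\V$ one has
\begin{equation*}
\langle\jump{(I_h v)_n},\xi_h\rangle\;\geq\;C_2\Big(\tfrac{1}{\mu_1}+\tfrac{1}{\mu_2}\Big)\|\xi_h\|_{-\frac12,\Gamma}^2\;-\;C_3\sum_{i=1}^2\sum_{E\in\Gh^i}\frac{h_E}{\mu_i}\|\xi_h\|_{0,E}^2,
\end{equation*}
i.e.\ the interpolation error in the duality pairing is controlled exactly by the mesh-dependent term that the first step has already produced with the favourable coefficient $\alpha$. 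This is the one nontrivial ingredient of the discrete proof, and it is the point your argument elides when you write that ``one checks the inf-sup argument still goes through with a discrete test function.'' Once this estimate is in place, the rest of your plan closes as in the paper; your auxiliary construction of $\V_h^{(2)}$ with $\sihnv$ proportional to $\xi_h$ is unnecessary, since, as you yourself observe, testing the multiplier slot with $-\xi_h$ already yields the $+\alpha\sum_{i,E}\frac{h_E}{\mu_i}\|\xi_h\|_{0,E}^2$ contribution.
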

\begin{proof}
From the  discrete trace estimate it follows that
\begin{align*}
 \Bf_h(\W_h,\xi_h;\W_h,-\xi_h) \geq  \left(1-\frac{\alpha}{C_I}\right)\, \sum_{i=1}^2 \mu_i \Vert \W_{i,h} \Vert_{1,\Omega_i}^2 +   \alpha  \sum_{i=1}^2 \sum_{E\in\Gh}\frac{h_E}{\mu_i} \|\xi_h\|_{0,E}^2 \,,
\end{align*}
which   proves the result in the mesh-dependent norm of $\xi_h$ for $0<\alpha < C_I$.

On the other hand,  the continuous inf-sup condition \eqref{continfsup} implies that for any $\xi_h\in Q_h$ there exists $\V\in\bV$ such that
\[
 \frac{\langle \jump{v_n}, \xi_h\rangle}{(\sum_{i = 1}^2 \mu_i \|\nabla \V_i\|_{0,\Omega_i}^2)^{1/2}}
\geq C_1   \left(\frac{1}{\mu_1} + \frac{1}{\mu_2} \right)^{1/2} \|\xi_h\|_{-\frac12,\Gamma} \, .
\]
This means that (cf. the proof of Lemma 3.2 in \cite{GSV17})
\begin{align}
\label{prop1} \langle \jump{(I_hv)_n}, \xi_h\rangle & \geq C_2      \left(\frac{1}{\mu_1} + \frac{1}{\mu_2} \right)\|\xi_h\|_{-\frac12,\Gamma}^2 
-C_3 \sum_{i=1}^2 \sum_{E\in\Gh}\frac{h_E}{\mu_i} \|\xi_h\|_{0,E}^2 \\
\label{prop2}
\sum_{i = 1}^2 \mu_i \| I_h\V_i\|_{1,\Omega_i} & \leq C_4      \left(\frac{1}{\mu_1} + \frac{1}{\mu_2} \right)\|\xi_h\|_{-\frac12,\Gamma}^2
\end{align}
where $C_2,C_3,C_4$ are positive constants and  $I_{h}\V\in\bV_h$ is the Cl\'ement interpolant of $\V$.  Using again the  discrete trace estimate and  inequalities \eqref{prop1} and \eqref{prop2}, we then obtain
\begin{align*}
 \Bf_h(\W_h,\xi_h;-I_h\V,0) =& - \sum_{i=1}^2 (\Sig_i(\W_{i,h}), \Eps(I_h\V_i))_{\Omega_i} +\langle \jump{(I_hv)_n}, \xi_h\rangle \\
 & 
 - \sum_{i=1}^2 \sum_{E \in \Gh^i} \frac{h_E}{\mu_i}\Big(\xi_h + \sihnw, \sigma_{i,n}(I_h\V_i)\Big)_E, \\
 \geq & \ C_5      \left(\frac{1}{\mu_1} + \frac{1}{\mu_2} \right)\|\xi_h\|_{-\frac12,\Gamma}^2  -C_6 \sum_{i = 1}^2 \mu_i \|\W_{i,h}\|_{1,\Omega_i}  \\
 & -C_7 \sum_{i=1}^2 \sum_{E\in\Gh}\frac{h_E}{\mu_i} \|\xi_h\|_{0,E}^2 .
\end{align*}

Now, it is straightforward to show (cf. \cite{GSVmortar}) that there exists $\delta >0$ such that
\[
 \Bf_h(\W_h,\xi_h;\W_h-\delta I_h\V,-\xi_h)  \gtrsim \enorm{(\W_h,\xi_h)}_h^2\,.
 \]
 and that $\|\W_h-\delta I_h\V\|_V\lesssim \enorm{(\W_h,\xi_h)}_h$.
\end{proof}
In our improved error analysis, we use techniques from the a posteriori error analysis. 
Let $\F_{i,h}\in \bV_{i,h}$ be the $[L^2(\Omega_i)]^d$ projection of $\F_i$, define on any $K\in \Ch^i$ the  oscillation of $\F_i$ by
\[
{\rm osc}_K (\F_i )= h_K \|\F_i-\F_{i,h}\|_{0,K}\, , \qquad i=1,2,
\]
and, for each $E \in \Gh^i$, let $K(E) \in \Gh^i$ denote the element such that $\partial K(E)\cap E =E$.
\begin{lem}
For any $(\V_h,\eta_h)\in \bV_h\times Q_h$, it holds that
    \begin{equation}\begin{aligned}
  \Big(    \sum_{i=1}^2 \sum_{E \in \Gh^i}  & \frac{h_E}{\mu_i} \left\| \eta_h + \sihnv \right\|_{0,E}^2 \Big)^{1/2} \\ & \leq    \enorm{(\U-\V_h,\lambda-\eta_h)} + 
    \Big(    \sum_{i=1}^2 \mu_i^{-1}\sum_{E \in \Gh^i} {\rm osc}_{K(E)} (\F_i)^2\Big)^{1/2}.
    \end{aligned}
    \end{equation}
    \label{osclem}
\end{lem}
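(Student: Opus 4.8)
The plan is to estimate each local quantity $\frac{h_E}{\mu_i}\|\eta_h + \sihnv\|_{0,E}^2$ by inserting the exact solution. First I would use the first equilibrium equation $-\Div\Sig_i(\U_i) = \F_i$ and the sign conventions on $\Gamma$ to rewrite $\eta_h + \sihnv$ on a facet $E \in \Gh^i$ as $(\eta_h - \lambda) + (\sigma_{i,n}(\V_{i,h}) - \sigma_{i,n}(\U_i))$ — recalling $\lambda = -\sigma_{i,n}(\U_i)$ — so that the residual splits into a multiplier part and a traction part. The triangle inequality then reduces the problem to bounding $\sum_{i,E}\frac{h_E}{\mu_i}\|\eta_h-\lambda\|_{0,E}^2$ and $\sum_{i,E}\frac{h_E}{\mu_i}\|\sigma_{i,n}(\V_{i,h}-\U_i)\|_{0,E}^2$ separately.

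For the multiplier term, the weight $\frac{h_E}{\mu_i}$ together with a local inverse/trace-type argument on $\Gcap$ converts the weighted $L^2$-sum into the $H^{-1/2}(\Gamma)$-norm of $\lambda - \eta_h$ (scaled by $\mu_i^{-1/2}$), which is precisely a piece of $\enorm{(\U-\V_h,\lambda-\eta_h)}$; this is the standard equivalence between mesh-weighted $L^2$ and fractional Sobolev norms on quasi-uniform boundary meshes, used in the same way as in \cite{GSV17,GSVmortar}. For the traction term, the key is a local estimate on $K(E)$: one has $\sigma_{i,n}(\V_{i,h}-\U_i) = \Sig_i(\V_{i,h}-\U_i)\N_i\cdot\N$, and since $-\Div\Sig_i(\V_{i,h}) = \F_{i,h}$ up to the discrete residual while $-\Div\Sig_i(\U_i)=\F_i$, an integration by parts / equilibrated-residual argument over $K(E)$ yields
\[
\frac{h_E}{\mu_i}\|\sigma_{i,n}(\V_{i,h}-\U_i)\|_{0,E}^2 \lesssim \mu_i\|\V_{i,h}-\U_i\|_{1,K(E)}^2 + \mu_i^{-1}\,{\rm osc}_{K(E)}(\F_i)^2,
\]
where the oscillation term absorbs the data approximation $\F_i - \F_{i,h}$ and the $h_K\|\F_{i,h}\|$-type interior residual is controlled by $\mu_i\|\V_{i,h}-\U_i\|_{1,K(E)}$ after one more integration by parts against $\V_{i,h}-\U_i$. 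Summing over $E$ and using that the patches $K(E)$ have bounded overlap gives $\sum_{i,E}\frac{h_E}{\mu_i}\|\sigma_{i,n}(\V_{i,h}-\U_i)\|_{0,E}^2 \lesssim \sum_i \mu_i\|\U_i-\V_{i,h}\|_{1,\Omega_i}^2 + \sum_i\mu_i^{-1}\sum_E {\rm osc}_{K(E)}(\F_i)^2$, the first sum being another piece of $\enorm{(\U-\V_h,\lambda-\eta_h)}$.

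Combining the two bounds via the triangle inequality and taking square roots yields the claimed estimate. I expect the main obstacle to be the traction term: making the local integration-by-parts argument rigorous requires care because $\Sig_i(\U_i)\N_i$ need only be interpreted in $H^{-1/2}(\partial K(E))$ a priori, so one must argue that on $E \subset \Gamma$ (where $\U_i$ has extra regularity coming from $\lambda \in H^{-1/2}$ and the complementarity conditions, or simply by a density argument reducing to smooth data) the normal trace is genuinely in $L^2(E)$ and the weighted $L^2$ estimate is legitimate; alternatively one sidesteps this by testing the difference of the two equilibrium equations against a bubble-weighted lifting of $\sigma_{i,n}(\V_{i,h}-\U_i)$ on $E$, which is the cleaner route and the one I would actually carry out. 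The scaling constants all come out mesh-independent by the usual homogeneity check, so no delicate tracking is needed there.
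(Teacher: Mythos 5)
Your decomposition $\eta_h + \sihnv = (\eta_h - \lambda) + \big(\sigma_{i,n}(\V_{i,h}) - \sigma_{i,n}(\U_i)\big)$ is where the argument breaks down, and the failure is more fundamental than the caveat you raise at the end. Under the paper's standing assumptions the solution satisfies only $\U_i \in [H^1(\Omega_i)]^d$ and $\lambda \in \mQ$, so neither $\sigma_{i,n}(\U_i)$ nor $\lambda$ is an $L^2(\Gamma)$ function: the two pieces of your splitting are individually meaningless in $\|\cdot\|_{0,E}$, even though their sum equals the facetwise-polynomial quantity $\eta_h + \sihnv$. Avoiding exactly this obstruction --- never invoking $H^s$, $s>3/2$, regularity --- is one of the stated goals of the paper, so assuming extra smoothness is not an option, and a density argument does not rescue the estimate because it must be uniform. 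Moreover, even granting $\lambda\in L^2(\Gamma)$, your bound for the multiplier piece uses the inverse inequality $h_E^{1/2}\|\eta_h-\lambda\|_{0,E}\lesssim\|\eta_h-\lambda\|_{-\frac12,\Gamma}$ in the direction that holds only for elements of a finite-dimensional space; $\lambda$ is not discrete, so this step is false in general. Your ``cleaner'' alternative inherits the same defect, since a bubble lifting of $\sigma_{i,n}(\V_{i,h}-\U_i)$ still requires the exact traction as an $L^2$ function on $E$.

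The repair is the route the paper actually takes: lift the \emph{whole} discrete residual by the facet bubble, i.e.\ set $\tau_E|_E = \frac{h_E b_E}{\mu_1}\big(\eta_h + \sonehnv\big)$, extend by zero, and assemble $\Tau\in\bV_{1,h}$ with $\tau_n=\sum_{E}\tau_E$. The norm equivalence for bubble-weighted $L^2$ products gives $\frac{h_E}{\mu_1}\|\eta_h+\sonehnv\|_{0,E}^2\lesssim(\eta_h+\sonehnv,\tau_E)_E$, and the exact solution then enters only by testing the variational inequality \eqref{weakform} with $(\V_1,\V_2,\eta)=(-\Tau,0,\lambda)$, so that $\Sig_1(\U_1)$ and $\lambda$ appear exclusively inside $(\Sig_1(\U_1),\Eps(\Tau))_{\Omega_1}$ and the duality pairing $\langle\tau_n,\eta_h-\lambda\rangle$; both are controlled by $\enorm{(\U-\V_h,\lambda-\eta_h)}$ times $\|\Tau\|_{1,\Omega_1}$ without any pointwise trace of the exact traction. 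Integrating $(\Sig_1(\V_{1,h}),\Eps(\Tau))_{\Omega_1}$ by parts elementwise produces the interior residual $\Div\Sig_1(\V_{1,h})+\F_1$, which yields the oscillation term, and the inverse estimate $\mu_1\|\Tau\|_{1,\Omega_1}^2\lesssim\sum_{E}\frac{h_E}{\mu_1}\|\eta_h+\sonehnv\|_{0,E}^2$ closes the argument. Your overall architecture (bubble lifting, bounded overlap of the patches $K(E)$, absorbing $\F_i-\F_{i,h}$ into the oscillation) is sound; the missing idea is that the exact solution must never be separated out as an $L^2$ boundary function but only tested weakly.
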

\begin{proof} We  follow the reasoning presented for  the mortar method in \cite{GSVmortar}. It is clearly enough to prove the result in  $\Omega_1$. Thus, let  $b_E \in P_d(E)$, $E \in \Gh^1$, be the usual edge/facet bubble function
 and   define $\tau_E$  on $K(E) \in \Ch^1$ through
    \[
        \tau_E\big|_E = \frac{h_E b_E}{\mu_1} \Big(\eta_h  + \sonehnv \Big) \quad \text{and} \quad \tau_E\big|_{\partial K(E) \setminus E} = 0,
    \]
    where $K(E) $  is  such that $\overline{K(E)} \cap E = E$.
    It follows that
    \begin{equation}
        \frac{h_E}{\mu_1}\Big\|\eta_h+ \sonehnv \Big\|^2_{0,E} \lesssim 
                                                                \Big( \eta_h+\sonehnv , \tau_E \Big)_E.
  \label{aux1}
  \end{equation}
  Next, defining $\Tau \in \bV_{1,h}$ in such a way that $\tau_n:=-\Tau\cdot\N=  \sum_{E \in \Gh^1} \tau_E$ and testing problem \eqref{weakform} with $(\V_1, \V_2, \eta) = (-\Tau, 0, \lambda)$, we obtain 
  \[
   0\leq ( \Sig_1(\U_1),\Eps(\Tau))_{\Omega_1} - \dual{\tau_n, \lambda}-(\F_1,\Tau)_{\Omega_1}.
   \]
    Summing \eqref{aux1}  over the edges in $\Gh^1$, gives then
    \begin{align*}
        &\sum_{E \in \Gh^1} \frac{h_E}{\mu_1} \Big\|\eta_h+ \sonehnv\Big\|^2_{0,E} \\
        &\lesssim \langle \tau_n, \eta_h - \lambda \rangle +      (\Sig_1(\U_1),\Eps(\Tau))_{\Omega_1}   -  (\F_1,\Tau)_{\Omega_1}    
   + \sum_{E \in \Gh^1}  (\sonehnv ,  \tau_E)_E  \\
 &  =   \langle \tau_n, \eta_h - \lambda \rangle +      (\Sig_1(\U_1),\Eps(\Tau))_{\Omega_1}   -  (\F_1,\Tau)_{\Omega_1}  \\
 &\quad   -
( \Div \Sig_1(\V_{1,h}),\Tau)_{\Omega_1} -  (\Sig_1(\V_{1,h}) , \Eps(\Tau)_{\Omega_1}
   \\
        &= \langle \tau_n, \eta_h - \lambda \rangle +      (\Sig_1(\U_1)-\Sig_1(\V_{1,h}),\Eps(\Tau))_{\Omega_1}   -  ( \Div \Sig_1(\V_{1,h})+\F_1,\Tau)_{\Omega_1} \, .
    \end{align*}
Inverse estimates imply that
\begin{equation}
\mu_1\|\Tau\|_{1,\Omega_1}^2 \lesssim \mu_1 \sum_{E \in \Gh^1}  h_E^{-2} \|\tau_E\|_{0,K(E)}^2 \, \lesssim \sum_{E \in \Gh^1}   \frac{h_E}{\mu_1} \left\| \eta_h + \sonehnv \right\|_{0,E}^2 \,.
\label{invest}
\end{equation}
Now, one readily sees, using trace inequalities and the norm equivalence \eqref{normequi}, that
\begin{align*}
& \sum_{E \in \Gh^1} \frac{h_E}{\mu_1} \Big\|\eta_h+\sonehnv\Big\|^2_{0,E}  \\ & \quad \lesssim
\mu_1^{-1/2}\|\eta_h - \lambda||_{-\frac{1}{2},\Gamma} \, \mu_1^{1/2} \|\Tau\|_{1,\Omega_1}  +
 \mu_1^{1/2} \|\U_1-\V_{1,h}\|_{1,\Omega_1}\, \mu_1^{1/2} \|\Tau\|_{1,\Omega_1}\\
 &\quad + \bigg(\sum_{E \in \Gh^1} \frac{h_E^2}{\mu_1} \|\Div \Sig_1(\V_{1,h})+\F_1\|^2_{0,E}\bigg)^{1/2} \ \bigg( \mu_1 \sum_{E \in \Gh^1}  h_E^{-2} \|\tau_E\|_{0,K(E)}^2\bigg)^{1/2}\, ,
\end{align*}
from which, using the standard estimates for interior residuals (cf. \cite{Vbook}) and the inverse estimate \eqref{invest} to bound the last term,  it follows that
\begin{align*}
& \big(\sum_{E \in \Gh^1} \frac{h_E}{\mu_1} \Big\|\eta_h+\sonehnv\Big\|^2_{0,E} \Big)^{1/2} \lesssim 
 \enorm{(\U-\V_h,\lambda-\eta_h)}  + \Big(\mu_1^{-1}\sum_{E \in \Gh^1} {\rm osc}_{K(E)} (\F_1)^2\Big)^{1/2} \,.
\end{align*}

\end{proof}

We can now establish the quasi-optimality of the method.

\begin{thm} For $0<\alpha< C_I$ 
    it holds that
    \begin{equation}\begin{aligned}
        \enorm{(\U-\U_h, \lambda -   \lambda_h)}    \lesssim  &  \inf_{(\V_h,\eta_h)\in  \bV_h\times\Lambda_h}\Big(\enorm{(\U-\V_h,\lambda-\eta_h)}  +
           \sqrt{\langle \jump{u_n} ,\eta_h\rangle} \Big)
            \\   & \qquad +
          \Big(   \sum_{i=1}^2  \mu_i^{-1} \sum_{E \in \Gh^i} {\rm osc}_{K(E)} (\F_i)^2\Big)^{1/2}.
         \end{aligned}
         \end{equation}
\end{thm}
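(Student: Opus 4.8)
The plan is to combine the discrete stability theorem with the oscillation lemma (Lemma~\ref{osclem}) in the standard way one proves quasi-optimality for stabilised methods, adapting the argument to the variational \emph{inequality} setting. Fix an arbitrary $(\V_h,\eta_h)\in\bV_h\times\Lambda_h$ and write the error as $(\U-\U_h,\lambda-\lambda_h) = (\U-\V_h,\lambda-\eta_h) + (\V_h-\U_h,\eta_h-\lambda_h)$. By the triangle inequality it suffices to bound the discrete part $\enorm{(\V_h-\U_h,\eta_h-\lambda_h)}_h$; note we may upgrade to the mesh-dependent norm here because the extra term $\sum_i\sum_{E}\frac{h_E}{\mu_i}\|\eta_h-\lambda_h\|_{0,E}^2$ is controlled, via Lemma~\ref{osclem} (applied to $\eta_h$) together with the discrete trace estimate \eqref{dtrace} applied to $\lambda_h=(l_h(\U_h))_+$, by the right-hand side we are after. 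So set $(\W_h,\xi_h)=(\V_h-\U_h,\eta_h-\lambda_h)$ and invoke the discrete stability theorem to get $\Trac_h\in\bV_h$ with $\Bf_h(\W_h,\xi_h;\Trac_h,-\xi_h)\gtrsim\enorm{(\W_h,\xi_h)}_h^2$ and $\|\Trac_h\|_V\lesssim\enorm{(\W_h,\xi_h)}_h$.

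The next step is to expand $\Bf_h(\V_h-\U_h,\eta_h-\lambda_h;\Trac_h,-\xi_h)$ by linearity into $\Bf_h(\V_h,\eta_h;\Trac_h,-\xi_h) - \Bf_h(\U_h,\lambda_h;\Trac_h,-\xi_h)$. For the term involving the discrete solution, I would use the discrete variational inequality \eqref{VIh}: testing with $(\Trac_h,\eta_h')$ for suitable $\eta_h'$ gives an inequality, and the complementarity identity $\lambda_h=(l_h(\U_h))_+$ (equivalently \eqref{discvi} with equality on $\Gamma_c(\U_h)$) lets one control $\Bf_h(\U_h,\lambda_h;\Trac_h,-\lambda_h+\eta_h)$ against $\Lf(\Trac_h)$ up to sign-definite contact terms $\langle\jump{u_{h,n}},\lambda_h\rangle$ and $\langle\jump{u_{h,n}},\eta_h\rangle$ — the former vanishes ($\jump{u_{h,n}}$ and $\lambda_h$ have opposite support/sign structure through the positive-part relation), the latter is $\geq 0$ and gets absorbed on the correct side. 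For the term involving $(\V_h,\eta_h)$, I would subtract and add the \emph{continuous} consistency relation: $\Bf(\U,\lambda;\Trac_h,\zeta) = \Lf(\Trac_h)$ as an equality whenever the contact terms vanish, and more generally $\Bf(\U,\lambda;\Trac_h,-\eta_h+\lambda)\le\Lf(\Trac_h)$ from \eqref{weakform}. Rewriting $\Bf_h(\V_h,\eta_h;\cdot) = \Bf(\V_h,\eta_h;\cdot) - \alpha\Sf(\V_h,\eta_h;\cdot)$ and $\Bf(\V_h,\eta_h;\cdot)-\Bf(\U,\lambda;\cdot) = \Bf(\V_h-\U,\eta_h-\lambda;\cdot)$, the $\Bf$-difference is bounded by $\enorm{(\U-\V_h,\lambda-\eta_h)}\,\|\Trac_h\|_V$ by continuity of $\Bf$ (elasticity bilinear form is bounded, and the duality term by the $-\tfrac12$/$\tfrac12$ pairing), the stabilisation term $\alpha\Sf(\V_h,\eta_h;\Trac_h,-\xi_h)$ is bounded via Cauchy--Schwarz and \eqref{dtrace} by $(\sum_i\sum_E\frac{h_E}{\mu_i}\|\eta_h+\sigma_{i,n}(\V_{i,h})\|_{0,E}^2)^{1/2}\cdot\|\Trac_h\|_V$, and this first factor is exactly what Lemma~\ref{osclem} estimates by $\enorm{(\U-\V_h,\lambda-\eta_h)} + (\sum_i\mu_i^{-1}\sum_E{\rm osc}_{K(E)}(\F_i)^2)^{1/2}$.

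Assembling: $\enorm{(\W_h,\xi_h)}_h^2 \lesssim \Bf_h(\W_h,\xi_h;\Trac_h,-\xi_h) \lesssim \big(\enorm{(\U-\V_h,\lambda-\eta_h)} + \sqrt{\langle\jump{u_n},\eta_h\rangle} + {\rm osc}\big)\,\|\Trac_h\|_V + (\text{sign-definite terms absorbed})$, and since $\|\Trac_h\|_V\lesssim\enorm{(\W_h,\xi_h)}_h$ one divides through to conclude; the triangle inequality and taking the infimum over $(\V_h,\eta_h)$ finishes the proof. The main obstacle is the careful handling of the contact/complementarity terms: because we work with variational inequalities on both the continuous and discrete sides, the consistency argument only yields one-sided estimates, and one must check that every uncontrolled term that appears is sign-definite in the favourable direction (so it can be dropped) except for $\langle\jump{u_n},\eta_h\rangle$, which is precisely why this nonnegative quantity appears under a square root in the final bound — exactly as in the membrane obstacle analysis of \cite{GSV17} and the mortar analysis of \cite{GSVmortar}. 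A secondary technical point is justifying the upgrade from $\enorm{\cdot}$ to $\enorm{\cdot}_h$ for the discrete error, which relies on applying Lemma~\ref{osclem} and \eqref{dtrace} to $\lambda_h$ itself.
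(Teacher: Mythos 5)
Your proposal follows essentially the same route as the paper: discrete stability applied to the discrete error pair, linearity, the discrete variational inequality to replace $\Bf_h(\U_h,\lambda_h;\cdot)$ by $\Lf$, the continuous weak formulation to produce the single non-sign-definite term $\langle\jump{u_n},\eta_h\rangle$, and Cauchy--Schwarz plus Lemma~\ref{osclem} for the stabilisation term, finishing with the triangle inequality and the infimum. Two of your side remarks are unnecessary detours rather than needed steps: the inequality \eqref{VIh} applies directly with the test pair $(\mp\Trac_h,\eta_h)$, so no extra terms $\langle\jump{u_{h,n}},\lambda_h\rangle$ or $\langle\jump{u_{h,n}},\eta_h\rangle$ ever arise there (and the former does not in fact vanish in general), and no ``upgrade'' argument is needed to work in $\enorm{\cdot}_h$ since it dominates $\enorm{\cdot}$ and is exactly what discrete stability delivers.
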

\begin{proof}
On account of the discrete stability estimate, there exists $\W_h\in \bV_h$ such that 
\begin{equation} \label{whbound}
 \|\W_h\|_V \lesssim \enorm{(\U_h-\V_h,\lambda_h-\eta_h)}_h,
\end{equation}
   and 
\begin{equation}
 \enorm{(\U_h-\V_h,\lambda_h-\eta_h)}_h^2 \lesssim \Bf_h(\U_h-\V_h,\lambda_h-\eta_h;\W_h,\eta_h-\lambda_h) .
 \end{equation}
 Using the bilinearity and \eqref{VIh}, we obtain
     \begin{equation}
 \begin{aligned}
  & \Bf_h(\U_h-\V_h,\lambda_h-\eta_h;\W_h,\eta_h-\lambda_h)   
   \\  &\ = \Bf_h(\U_h,\lambda_h;\W_h,\eta_h-\lambda_h) -\Bf_h(\V_h,\eta_h;\W_h,\eta_h-\lambda_h) 
   \\
   & \ \lesssim  \Lf(\W_h)  -\Bf_h(\V_h,\eta_h;\W_h,\eta_h-\lambda_h) 
   \\
   & \  = \  \Bf(\U-\V_h,\lambda-\eta_h;\W_h,\eta_h-\lambda_h)    +  \Lf(\W_h)  
  \\    &\qquad -  \Bf(\U,\lambda;\W_h,\eta_h-\lambda_h)+\alpha \Sf(\V_h,\eta_h;\W_h,\eta_h-\lambda_h).
  \end{aligned}
      \end{equation}
      The terms above can be estimated as follows. First, continuity of the bilinear form $\Bf$ and inequality \eqref{whbound} yield
      \begin{equation}
      \Bf(\U-\V_h,\lambda-\eta_h;\W_h,\eta_h-\lambda_h)\lesssim  \enorm{(\U-\V_h,\lambda-\eta_h)}\,  \enorm{(\U_h-\V_h,\lambda_h-\eta_h)} .
      \end{equation}
      Next, using the weak formulation \eqref{weakform} and the fact that $\jump{u_n}\geq 0$ and $\lambda_h\geq 0$, we obtain
         \begin{equation}  
      \Lf(\W_h)  
       -  \Bf(\U,\lambda;\W_h,\eta_h-\lambda_h)           =    \langle \jump{u_n},\eta_h-\lambda_h\rangle\leq
        \langle \jump{u_n},\eta_h\rangle .
      \end{equation}
      Finally, from the discrete trace estimate \eqref{dtrace} it follows that 
      \begin{equation}
       \begin{aligned}
    &  \alpha \Sf(\V_h,\eta_h;\W_h,\eta_h-\lambda_h)   \\
  &  \qquad\lesssim  \Big(  \sum_{i=1}^2 \sum_{E \in \Gh^i}   \frac{h_E}{\mu_i} \left\| \eta_h + \sihnu\right\|_{0,E}^2 \Big)^{1/2}   \enorm{(\U_h-\V_h,\lambda_h-\eta_h)}_h.
  \end{aligned}
      \end{equation}
   Using Lemma \ref{osclem},     and  collecting the above estimates,  we arrive at the asserted error estimate.
       \end{proof}
\begin{rem}  We refrain from giving an a priori error estimate assuming a regular solution. The reasons are twofold. Firstly, contact singularities are inevitable and essential in contact problems.
Secondly, to derive an a priori bound, one would need  to estimate the term
$ \sqrt{\langle \jump{u_n} ,\eta_h\rangle}$, with $\eta_h$ being the interpolant to $\lambda$. Besides, and perhaps most importantly, one of the main results of this paper is the  fact that we do not need to assume that the solution belongs to $H^s$, with $ s>3/2. $
\end{rem}
For the a posteriori error analysis,  we define the local estimators
\begin{alignat}{1}
    \label{apost1} \eta_K^2 &= \frac{h_K^2}{\mu_i} \|\hspace{0.2mm}\Div \Sig_i(\U_{i,h}) + \F_i \|_{0,K}^2, \quad K \in \Ch^i,
     \\
    \label{apost2} \eta_{E,\Omega}^2 &= \frac{h_E}{\mu_i} \left \| \jump{\Sig_i(\U_{i,h})\N} \right\|_{0,E}^2, \quad E \in \Eh^i, 
    \\
    \label{apost3} \eta_{E,\Gamma}^2 &= \frac{h_E}{\mu_i} \left\{\left\| \lambda_h + \sihnu \right\|_{0,E}^2+ 
    \|\hspace{0.2mm} \Sig_{i,t}(\U_{i,h})\|_{0,E}^2\right\}
    \\
                                     &\qquad+\frac{\mu_i}{h_E} \Vert (\jump{u_{h,n} })_{-}\Vert_{0,E}^2,\quad E \in \Gh^i, 
        \nonumber                             \\
    \label{apost4} \eta_{E,\Gamma_N}^2 &= \frac{h_E}{\mu_i} \left \|\hspace{0.2mm}\Sig_i(\U_{i,h})\N \right\|_{0,E}^2, \quad E \in \Nh^i,  
\end{alignat}
with $i=1,2$. The corresponding global estimator $\eta$ is then defined as
\begin{equation}
    \eta^2 = \sum_{i=1}^2 \Big\{\sum_{K \in \Ch^i} \eta_K^2 + \sum_{E \in \Eh^i} \eta_{E,\Omega}^2 + \sum_{E \in \Gh^i} \eta_{E,\Gamma}^2 + \sum_{E \in \Nh^i} \eta_{E,\Gamma_N}^2 \Big\}.
\end{equation}
In addition, we need an estimator $S$ defined only globally as
\begin{equation}
    S^2 = \big((\jump{u_{h,n}})_+,\lambda_h\big)_\Gamma.
\end{equation}

\begin{thm}[A posteriori error estimate]
    It holds that
    \begin{equation}
        \enorm{(\U-\U_h,\lambda-\lambda_h)} \lesssim \eta + S.
        \label{apostupper}
    \end{equation}
\end{thm}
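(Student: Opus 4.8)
The plan is to combine the continuous stability estimate (Theorem on continuous stability) with a residual-type argument, exactly in the spirit of the a posteriori analysis for the mortar method in \cite{GSVmortar} and for the obstacle problem in \cite{GSV17}. By the continuous stability estimate there exists $\V\in\bV$ with $\|\V\|_V\lesssim\enorm{(\U-\U_h,\lambda-\lambda_h)}$ such that
\[
\enorm{(\U-\U_h,\lambda-\lambda_h)}^2 \lesssim \Bf(\U-\U_h,\lambda-\lambda_h;\V,-(\lambda-\lambda_h)).
\]
The first step is therefore to expand the right-hand side using the weak formulation \eqref{weakform}. Testing \eqref{weakform} with $(\V,\lambda)$ and then with $(\Z,\eta)$ for suitable $\eta\in\varLambda$ lets me replace the exact-solution contributions by data and by the complementarity relations $\jump{u_n}\geq0$, $\lambda\geq0$, $\langle\jump{u_n},\lambda\rangle=0$. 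The leftover discrete terms $(\Sig_i(\U_{i,h}),\Eps(\V_i))_{\Omega_i}$, $\langle\jump{v_n},\lambda_h\rangle$, $\langle\jump{u_{h,n}},\lambda-\lambda_h\rangle$ are then handled by the Nitsche/stabilised formulation \eqref{nsystem} (equivalently Problem~\ref{prob:stab}), after inserting a discrete approximation $I_h\V\in\bV_h$ of $\V$ (Cl\'ement interpolant) and using Galerkin-type orthogonality.

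**Key steps in order.**
\textbf{(1)} Invoke continuous stability to reduce to bounding $\Bf(\U-\U_h,\lambda-\lambda_h;\V,-(\lambda-\lambda_h))$ from above by $C\,(\eta+S)\,\enorm{(\U-\U_h,\lambda-\lambda_h)}$, after which dividing through gives \eqref{apostupper}.
\textbf{(2)} Split $\V = (\V - I_h\V) + I_h\V$. For the interpolation part $\V-I_h\V$, integrate by parts elementwise on each $\Omega_i$: this produces the interior residual $\Div\Sig_i(\U_{i,h})+\F_i$ on each $K\in\Ch^i$, the interior flux jump $\jump{\Sig_i(\U_{i,h})\N}$ on each $E\in\Eh^i$, the Neumann residual $\Sig_i(\U_{i,h})\N$ on $E\in\Nh^i$, and on $\Gh^i$ the boundary traction $\Sig_i(\U_{i,h})\N_i$, whose normal and tangential parts combine with the multiplier terms to give $\lambda_h+\sihnu$ and $\Sig_{i,t}(\U_{i,h})$. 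Together with the standard Cl\'ement estimates $\|\V-I_h\V\|_{0,K}\lesssim h_K\|\V\|_{1,\tilde K}$, $\|\V-I_h\V\|_{0,E}\lesssim h_E^{1/2}\|\V\|_{1,\tilde K}$ and the norm equivalence \eqref{normequi}, this yields exactly the volume, interior-edge, Neumann, and the first line of the contact estimator \eqref{apost3}, all multiplied by $\enorm{(\U-\U_h,\lambda-\lambda_h)}$.
\textbf{(3)} For the part involving $I_h\V$, use the discrete equation \eqref{nsystem} (with the stabilisation $\Sf$, recalling $0<\alpha<C_I$) to cancel the consistent terms; the residual that survives is controlled, via the discrete trace estimate \eqref{dtrace}, by $\big(\sum_i\sum_{E\in\Gh^i}\frac{h_E}{\mu_i}\|\lambda_h+\sihnu\|_{0,E}^2\big)^{1/2}$, which is part of $\eta$.
\textbf{(4)} Handle the sign/complementarity terms: the contribution $\langle\jump{u_n},\lambda_h-\lambda\rangle$ and the non-penetration defect of $\U_h$. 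Here one bounds $\langle\jump{u_n},\lambda_h\rangle$ and, crucially, the negative part $(\jump{u_{h,n}})_-$ (violation of non-penetration by the discrete solution) by the term $\frac{\mu_i}{h_E}\|(\jump{u_{h,n}})_-\|_{0,E}^2$ in \eqref{apost3}, while the genuinely nonlinear leftover $\big((\jump{u_{h,n}})_+,\lambda_h\big)_\Gamma = S^2$ cannot be absorbed into $\eta$ and must be carried as the separate estimator $S$.

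**Main obstacle.**
The hard part is step~(4), i.e. treating the contact inequality constraint without a saturation assumption. Because $\lambda_h$ need not coincide with $-\sihnu$ pointwise and $\jump{u_{h,n}}$ need not be nonnegative, the exact complementarity $\langle\jump{u_n},\lambda\rangle=0$ and the discrete relation $\lambda_h=(l_h(\U_h))_+$ produce cross terms whose sign is not automatically favourable. The trick — following \cite{GSV17,GSVmortar} — is to test \eqref{weakform} with $\eta = 0$ and with $\eta = 2\lambda$ to extract $\langle\jump{u_n},\lambda-\lambda_h\rangle \le \langle\jump{u_n},-\lambda_h\rangle \le 0$ is \emph{not} directly available; instead one writes $\langle\jump{u_n},\lambda_h-\lambda\rangle$, adds and subtracts $\langle\jump{u_{h,n}},\lambda_h\rangle$, uses $\jump{u_{h,n}}=(\jump{u_{h,n}})_+ - (\jump{u_{h,n}})_-$, bounds $\langle(\jump{u_n}-\jump{u_{h,n}}),\lambda_h\rangle$ by $\|\lambda_h\|_{-1/2,\Gamma}\|\jump{u_n-u_{h,n}}\|_{1/2,\Gamma}$ (absorbed into the energy error times $\mu_i^{-1/2}\|\lambda_h\|_{-1/2}$, itself controlled by $\eta$ via Lemma~\ref{osclem}-type reasoning), and keeps $\big((\jump{u_{h,n}})_+,\lambda_h\big)_\Gamma$ as $S^2$ and $\frac{\mu_i}{h_E}\|(\jump{u_{h,n}})_-\|_{0,E}^2$ inside $\eta$. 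Making this bookkeeping precise — and checking that every term is either an $\eta$-contribution, an $S$-contribution, or proportional to $\enorm{(\U-\U_h,\lambda-\lambda_h)}$ with a constant that is independent of $h$ and of the Lam\'e parameters up to the excluded incompressible limit — is the crux of the argument.
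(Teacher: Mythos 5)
Your overall strategy coincides with the paper's: continuous stability to select $\V$, a Cl\'ement interpolant $\tilde\V=I_h\V$ combined with the discrete variational inequality \eqref{VIh}, elementwise integration by parts producing exactly the residuals in \eqref{apost1}--\eqref{apost4}, the discrete trace estimate \eqref{dtrace} for the stabilising term, and the split $\jump{u_{h,n}}=(\jump{u_{h,n}})_+-(\jump{u_{h,n}})_-$ with the $H^{1/2}$ inverse inequality of \cite{cf} to place the negative part in $\eta$ and retain $\big((\jump{u_{h,n}})_+,\lambda_h\big)_\Gamma=S^2$ separately. The one place where your bookkeeping goes astray is step (4): the duality term that actually survives the expansion of $\Lf(\V-\tilde\V)-\Bf(\U_h,\lambda_h;\V-\tilde\V,\lambda_h-\lambda)$ is $\langle\jump{u_{h,n}},\lambda_h-\lambda\rangle$ (the last slot of $\Bf$ carries the \emph{discrete} normal jump), not $\langle\jump{u_n},\lambda_h-\lambda\rangle$, so no add-and-subtract of $\langle\jump{u_{h,n}},\lambda_h\rangle$ is needed. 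Your proposed repair for that detour — bounding $\langle\jump{u_n}-\jump{u_{h,n}},\lambda_h\rangle$ by $\|\lambda_h\|_{-\frac12,\Gamma}\,\|\jump{u_n-u_{h,n}}\|_{\frac12,\Gamma}$ with $\|\lambda_h\|_{-\frac12,\Gamma}$ ``controlled by $\eta$'' — would not close: $\|\lambda_h\|_{-\frac12,\Gamma}$ is not an error quantity (it tends to $\|\lambda\|_{-\frac12,\Gamma}\neq 0$), and the resulting product of two error-sized factors has no small constant allowing absorption. Once you use the correct term $\langle\jump{u_{h,n}},\lambda_h-\lambda\rangle$, the sign of $\lambda$ and $(\jump{u_{h,n}})_+$ gives $\langle(\jump{u_{h,n}})_+,\lambda_h-\lambda\rangle\le S^2$ directly, and only the negative-part term requires the inverse inequality; with that correction your proof matches the paper's.
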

\begin{proof} In view of the continuous stability estimate, there exists $\V\in \bV $, with  
\begin{equation}\label{normalising}
\|\V\|_V \lesssim  
 \enorm{(\U-\U_h,\lambda-\lambda_h)}, 
\end{equation} and
\begin{equation}
 \enorm{(\U-\U_h   ,\lambda-\lambda_h)}^2 \lesssim \ \Bf(\U-\U_h,\lambda-\lambda_h; \V,\lambda_h-\lambda) .
\end{equation}
Let $\tilde{\V}\in \bV_h$ be  the Cl\'ement interpolant of $\V$. From \eqref{VIh}, it follows that 
\begin{equation}
0\leq -\Bf(\U_h, \lambda_h; \tilde{\V}, 0)+\alpha\Sf(\U_h,\lambda_h,-\tilde{\V},0)- \Lf (\tilde{\V}).
\end{equation}
 Using the weak formulation \eqref{weakform}, this gives
\begin{equation}
\begin{aligned}
&   \Bf(\U-\U_h,\lambda-\lambda_h; \V,\lambda_h-\lambda) \\ 
 & \qquad \lesssim \    \Lf(\V-\tilde{\V}) -\Bf(\U_h,\lambda_h; \V-\tilde{\V},\lambda_h-\lambda) +\alpha\Sf(\U_h,\lambda_h,-\tilde{\V},0).
\end{aligned}
\end{equation}
 Integrating by parts, we obtain for the first two terms above
\begin{equation}
\begin{aligned}
 &\Lf (\V   -   \tilde{\V}) -\Bf(\U_h,\lambda_h; \V-\tilde{\V},\lambda_h-\lambda) \\
 &= \sum_{i=1}^2 \sum_{K\in\Ch^i} (\Div \Sig_i(\U_{i,h}) +\F_i,\V_i-\tilde{\V}_i)_K
 \\&\quad  - \sum_{i=1}^2 \sum_{E\in\Eh^i} (\jump{\Sig_i(\U_{i,h})\N},\V_i-\tilde{\V}_i)_E \\
& \quad  -  \sum_{i=1}^2 \sum_{E\in\Nh^i} (\Sig_i(\U_{i,h})\N,\V_i-\tilde{\V}_i)_E 
 - \sum_{i=1}^2 \sum_{E\in\Gh^i} ( \situ, (\V_{i,t}-\tilde{\V}_{i,t}))_E \\
& \quad  - 
\sum_{i=1}^2 \sum_{E\in\Gh^i} (\lambda_h + \sihnu,(\V_i-\tilde{\V}_i)\cdot\N)_E + \langle\jump{u_{h,n}},\lambda_h-\lambda\rangle,
\end{aligned}
\end{equation}
 Moreover, using an inverse inequality for the $H^{1/2}(\Gamma)$-norm (cf. \cite{cf}) we get 
 \begin{equation}
\begin{aligned}
& \langle\jump{u_{h,n}},\lambda_h-\lambda\rangle 
  \leq \big( (\jump{u_{h,n}})_+,\lambda_h\big)_\Gamma + \big\langle(\jump{u_{h,n}})_{-},\lambda_h-\lambda\big\rangle
  \\
& \lesssim  \big( (\jump{u_{h,n}})_+,\lambda_h\big)_\Gamma 
\\
   &\quad + \enorm{(\U-\U_h,\lambda-\lambda_h)} \,  \big( (\mu_1+\mu_2) \Vert (\jump{u_{h,n}})_{-} \Vert_{1/2, \Gamma}^2\big)^{1/2}
    \\
& \lesssim  \big( (\jump{u_{h,n}})_+,\lambda_h\big)_\Gamma 
\\
   &\quad + \enorm{(\U-\U_h,\lambda-\lambda_h)} \,  \big(\sum_{i=1}^2 \sum_{E\in\Gh^i}  \frac{\mu_i}{h_E} \Vert (\jump{u_{h,n}})_{-} \Vert_{0,E}^2\big)^{1/2}.
\end{aligned}
\end{equation}
Finally, using the discrete trace estimate \eqref{dtrace} and  the standard bounds for the Cl\'ement interpolant, and recalling \eqref{normalising}, 
 we obtain for the stabilising term  
\begin{equation}\begin{aligned}
| \Sf(&\U_h  ,\lambda_h,-\tilde{\V},0)|
\\& \lesssim \Big(\sum_{i=1}^2 \sum_{E\in\Gh^i}  \frac{h_E}{\mu_i} \left\| \lambda_h + \sihnu\right\|_{0,E}^2 \Big)^{1/2}  
 \enorm{(\U-\U_h,\lambda-\lambda_h)} .
 \end{aligned}
\end{equation}
Estimate \eqref{apostupper} follows from collecting the above bounds. 
\end{proof}
 
 The estimator $\eta$ bounds the error from below. For the proof of the following theorem we refer to \cite{GSV17}.
 \begin{thm}[A posteriori estimate -- efficiency]
    It holds that
    \begin{equation}
     \eta  \lesssim  
      \enorm{(\U-\U_h,\lambda-\lambda_h)} .
        \label{apostupper}
    \end{equation}
\end{thm}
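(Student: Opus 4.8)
The plan is to prove the lower bound locally: I would bound each of the quantities $\eta_K$ ($K\in\Ch^i$), $\eta_{E,\Omega}$ ($E\in\Eh^i$), $\eta_{E,\Gamma}$ ($E\in\Gh^i$) and $\eta_{E,\Gamma_N}$ ($E\in\Nh^i$) by $\enorm{(\U-\U_h,\lambda-\lambda_h)}$ restricted to a small element patch, and then sum over the mesh. The interior residual terms would be treated by the standard bubble function technique of Verf\"urth, while the contact boundary terms would be controlled by Lemma~\ref{osclem} together with the structure of the strong form \eqref{strong}. In every case the mechanism is the same: use the equations $-\Div\Sig_i(\U_i)=\F_i$ in $\Omega_i$, the continuity of the normal traction across interior facets, $\Sig_i(\U_i)\N_i=\Z$ on $\Gamma_{N,i}$, $\situ=\Z$ and $\jump{u_n}\ge 0$ on $\Gamma$ to replace a discrete residual by the error (or by its trace on $\Gamma$).

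First, for $K\in\Ch^i$ I would take the element bubble $b_K$ and test against $\Tau=b_K\big(\Div\Sig_i(\U_{i,h})+\F_{i,h}\big)$, extended by zero outside $K$, with $\F_{i,h}$ the $L^2$ projection of $\F_i$; the polynomial norm equivalence, an integration by parts on $K$ combined with $-\Div\Sig_i(\U_i)=\F_i$, the inverse estimate $\|\Eps(\Tau)\|_{0,K}\lesssim h_K^{-1}\|\Tau\|_{0,K}$ and \eqref{normequi} then give $\eta_K\lesssim\mu_i^{1/2}\|\U_i-\U_{i,h}\|_{1,K}+\mu_i^{-1/2}{\rm osc}_K(\F_i)$. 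Next, for $E\in\Eh^i$ shared by $K$ and $K'$, I would test against $b_E$ times a polynomial extension of $\jump{\Sig_i(\U_{i,h})\N}$, integrate by parts on $K$ and $K'$, invoke $\jump{\Sig_i(\U_i)\N}=\Z$, and absorb the resulting volume terms into the already bounded $\eta_K,\eta_{K'}$. The Neumann residual $\eta_{E,\Gamma_N}$ ($E\in\Nh^i$) and the tangential contribution $\tfrac{h_E}{\mu_i}\|\Sig_{i,t}(\U_{i,h})\|_{0,E}^2$ of $\eta_{E,\Gamma}$ would be treated by exactly the same facet bubble argument on the single adjacent element $K(E)$, now using $\Sig_i(\U_i)\N_i=\Z$ on $\Gamma_{N,i}$, respectively $\situ=\Z$ on $\Gamma$.

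It then remains to bound the two genuinely contact specific pieces of $\eta_{E,\Gamma}$. The normal traction consistency term requires no extra work: taking $(\V_h,\eta_h)=(\U_h,\lambda_h)\in\bV_h\times Q_h$ in Lemma~\ref{osclem} gives at once
\[
\Big(\sum_{i=1}^2\sum_{E\in\Gh^i}\frac{h_E}{\mu_i}\big\|\lambda_h+\sihnu\big\|_{0,E}^2\Big)^{1/2}\lesssim\enorm{(\U-\U_h,\lambda-\lambda_h)}+\Big(\sum_{i=1}^2\mu_i^{-1}\sum_{E\in\Gh^i}{\rm osc}_{K(E)}(\F_i)^2\Big)^{1/2}.
\]
For the constraint violation term $\tfrac{\mu_i}{h_E}\|(\jump{u_{h,n}})_{-}\|_{0,E}^2$ I would use that $t\mapsto(t)_{-}$ is $1$-Lipschitz and that $\jump{u_n}\ge 0$ a.e.\ on $\Gamma$, so that pointwise $|(\jump{u_{h,n}})_{-}|=|(\jump{u_{h,n}})_{-}-(\jump{u_n})_{-}|\le|\jump{u_n-u_{h,n}}|$, and then localise this trace of $\U-\U_h$ to the boundary elements $K(E)$. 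Summing all the local bounds, and noting that the data oscillation is of higher order (and vanishes when each $\F_i$ is piecewise polynomial), then yields $\eta\lesssim\enorm{(\U-\U_h,\lambda-\lambda_h)}$; the remaining routine estimates are those carried out for the obstacle problem in \cite{GSV17}.

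The hardest step will be the constraint violation term. The naive trace and scaling estimate for a quantity of the form $\tfrac{\mu_i}{h_E}\|w\|_{0,E}^2$ with $w$ a trace on $\Gamma$ loses a power of $h_E$ and is therefore not controlled by the $H^1$ part of the error alone; one must genuinely exploit that $(\jump{u_{h,n}})_{-}$ is supported on the (small) set where the discrete displacement violates the non-penetration constraint and that there it is dominated by the positive part $(\jump{u_n-u_{h,n}})_{+}$ of the normal trace of the error, which is itself small. This is the one place where the contact structure, rather than a routine residual computation, is essential; every other term reduces to an argument already used in the proof of Lemma~\ref{osclem} and of the a posteriori upper bound.
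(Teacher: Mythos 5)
The paper offers no proof of this theorem at all---it defers entirely to \cite{GSV17}---so there is no argument of the authors to compare yours with step by step. Most of your plan is the standard and correct one: element and facet bubble functions for $\eta_K$, $\eta_{E,\Omega}$, $\eta_{E,\Gamma_N}$ and the tangential part of $\eta_{E,\Gamma}$ (producing, as you note, the data-oscillation terms that the theorem's statement silently omits), and your observation that Lemma~\ref{osclem} applied with $(\V_h,\eta_h)=(\U_h,\lambda_h)$ disposes of the normal-traction consistency term in one line is exactly right---that lemma is built for precisely this purpose.

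The genuine gap is the one you flag yourself and then leave open: the constraint-violation term $\frac{\mu_i}{h_E}\|(\jump{u_{h,n}})_{-}\|_{0,E}^2$. The pointwise bound $|(\jump{u_{h,n}})_{-}|\le|\jump{u_n-u_{h,n}}|$, obtained from $\jump{u_n}\ge 0$ and the Lipschitz continuity of $t\mapsto (t)_{-}$, is correct, but the subsequent step you describe as ``localise this trace of $\U-\U_h$ to the boundary elements'' cannot be carried out: after the pointwise bound you would need
\[
\sum_{E\in\Gh^i}\frac{\mu_i}{h_E}\,\big\|\jump{u_n-u_{h,n}}\big\|_{0,E}^2\;\lesssim\;\sum_{j=1}^2\mu_j\|\U_j-\U_{j,h}\|_{1,\Omega_j}^2,
\]
which is an inverse-type inequality (an $h^{-1/2}$-weighted $L^2(\Gamma)$ norm controlled by $H^{1/2}(\Gamma)$, hence by $H^1(\Omega_j)$). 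Such an inequality holds only for functions in the discrete space, and the error $\U-\U_h$ is not discrete; a scaled trace inequality loses exactly the one power of $h_E$ you identify. Your closing remark that one must ``exploit that $(\jump{u_{h,n}})_{-}$ is supported on the small violation set'' names the obstruction but supplies no mechanism for overcoming it---smallness of the support does not repair the $h_E^{-1}$ weight. As written, this component of $\eta$ is not bounded by the error, so the proof is incomplete; closing it requires either a genuinely different treatment of the violation term (for instance measuring it in a norm for which the Lipschitz argument suffices, and then relating that norm to the mesh-weighted one appearing in $\eta_{E,\Gamma}$) or the argument actually carried out in \cite{GSV17}.
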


The analysis of Methods 2 and 3 is analogous. In the a posteriori estimates the term $$ \sum_{i=1}^2 \sum_{E \in \Gh^i}  \frac{h_E}{\mu_i} \left\| \lambda_h + \sihnu \right\|_{0,E}^2,$$
is replaced by
\begin{equation}
 \sum_{E \in \Gh^2}  \frac{h_E}{\mu_2} \left\| \lambda_h +\sigma_{2,n}(\U_{2,h}) \right\|_{0,E}^2,
\end{equation}
and
\begin{equation}
\Vert \beta_h^{-1/2}(\lambda_h +  \mean{\sigma _n(\U_h)})\Vert_{0, \Gamma}^2,
\end{equation}
for Method 2 and 3, respectively.

 \section{Computational experiments}

All computations presented in this section were obtained using the Nitsche formulation 3  with the term \eqref{dropterm} dropped. Had we considered other formulations, the results would have been practically identical.  We also note that since the stabilized/Nitsche's method is variationally conforming (as a mortaring method) it passes the patch test of \cite{Laursen03}, p. 425. This was confirmed numerically up to machine accuracy. 

We consider the geometry given by
\begin{equation}
    \Omega_1 = [0.5,1.0] \times [0.25,0.75], \quad \Omega_2 = [1,1.6] \times [0,1],
\end{equation}
and define the boundary conditions on the following subsets:
\begin{align}
    \Gamma_{D,1} &= \{ (x,y) \in \partial \Omega_1 : x=0.5 \}, \quad \Gamma_{N,1} = \partial\Omega_1 \setminus (\Gamma_{D,1} \cup \Gamma),\\
    \Gamma_{D,2} &= \{ (x,y) \in \partial \Omega_2 : x=1.6 \}, \quad \Gamma_{N,2} = \partial\Omega_2 \setminus (\Gamma_{D,2} \cup \Gamma).
\end{align}
Thus, the geometry is the one given in Figure~\ref{fig:twobodynot}. A nonmatching
discretisation of the geometry is depicted in Figure~\ref{fig:meshblocks}.
Initially, the material parameters are $E_1 = E_2 = 1$ and $\nu_1 = \nu_2 = 0.3$
and the loading is
\begin{equation}
  \F_1 = (x-0.5,0), \quad \F_2 = (0,0).
\end{equation}
For this loading, the displacement is constrained on $\Gamma_{D,i}$, $i=1,2$,
only in the horizontal direction which minimizes the effect of the singularities
-- other than the ones related to the contact boundary -- on the rates of
convergence.  We consider both linear and quadratic elements, with
$\alpha=10^{-2}$ and $\alpha=10^{-3}$, respectively.

The adaptively refined meshes are shown in
Figure~\ref{fig:adaptseq} and \ref{fig:adaptseq1}, and the global error estimator $\eta+S$ is plotted as a
function of the number of  degrees-of-freedom $N$ in
Figure~\ref{fig:adaptconv}. Since $\eta+S$ is an upper bound for
the total error, the results suggest that the total error of the quadratic solution is
limited to $\mathcal{O}(N^{-0.5})$ when using uniform refinements and that
adaptivity successfully improves the order of the discretisation error to $\mathcal{O}(N^{-1})$.

Next we fix also the vertical displacement on $\Gamma_{D,i}$, $i=1,2$,
and consider the loading
\begin{equation}
    \F_1 = (0,-0.05), \quad \F_2 = (0,0),
\end{equation}
which causes the left block to bend slightly downwards and, as a consequence,
the active contact region is a nontrivial subset of $\Gamma$. The active contact
region is found via an iterative solution of the linearised problem,
cf.~\cite{GSV17}.  See Figure~\ref{fig:adaptseq2} and \ref{fig:adaptseq21} for
the final meshes and contact stresses, and Figure~\ref{fig:adaptconv2} for the
convergence rates.  We observe that the singularity at the upper corner of the
contact region is properly resolved by the adaptive meshing strategy and that
the convergence is similar albeit less idealised as in the first
example.

In Figure~\ref{fig:adaptconv2E}, we demostrate how the improved convergence
rates can be obtained for $P_2$ elements even if the value of the Young's
modulus changes significantly over the contact boundary. In
Figure~\ref{fig:adaptconv2alpha}, we demonstrate that the effect of the
stabilisation parameter is small in the asymptotic limit.
Finally, in Figure~\ref{fig:final}, we consider the loading
\begin{equation}
    \F_1 = (-\cos(4 \pi (y - 0.5)),0), \quad \F_2 = (0,0),
\end{equation}
which results in an active contact boundary consisting of two
disjoint parts and a perfectly symmetric contact stress.

\begin{figure}[h!]
    \includegraphics[width=0.49\textwidth]{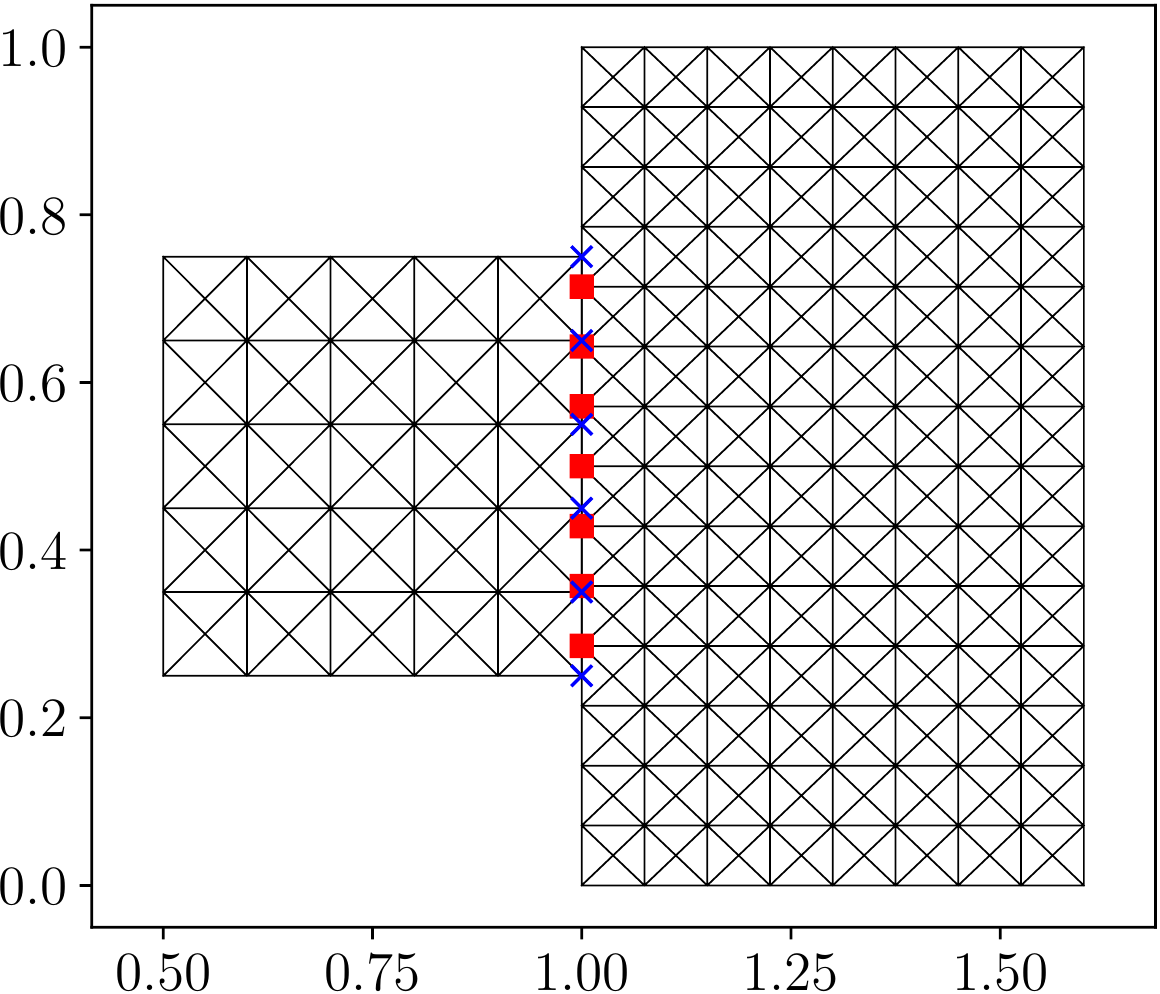}
    \caption{A finite element mesh and the vertices belonging to $\Gamma$.}
    \label{fig:meshblocks}
\end{figure}

\pgfplotstableread{
  ndofs eta
82 0.03973469149724339
272 0.024543634683048224
988 0.014696965226563396
3764 0.008649341647060958
14692 0.00486546952937192
}\unifPI

\pgfplotstableread{
  ndofs eta
82 0.03973469149724339
126 0.030035083325741986
200 0.023251855601360224
368 0.01718013761940071
628 0.01400852734514739
812 0.012194409588654358
1334 0.009715293148709643
1832 0.008303396548823559
2668 0.006941143911928724
3768 0.00600831970150925
5402 0.005077372254253719
7760 0.004241315176070536
10652 0.003648926885564471
14662 0.003162021500200948
}\adaptPI

\pgfplotstableread{
  ndofs eta
272 0.01614971558844067
988 0.008096327024555088
3764 0.003960091852532299
14692 0.0022786930407713165
}\unifPII

\pgfplotstableread{
  ndofs eta
272 0.01614971558844067
434 0.008290362778716209
662 0.004981300661617096
958 0.004050570185360159
1598 0.0025569231391579374
2130 0.0019464206297086092
2974 0.0015018117077493622
4364 0.0010527743099234864
5324 0.0008533196287345412
6728 0.0006541224049658154
8756 0.0004997496554225583
9824 0.0004514279641733795
12756 0.0003553917245163292
}\adaptPII

\begin{figure}[h!]
    \begin{subfigure}[t]{\textwidth}
        \centering
        \raisebox{-0.5\height}{\includegraphics[width=0.4\textwidth]{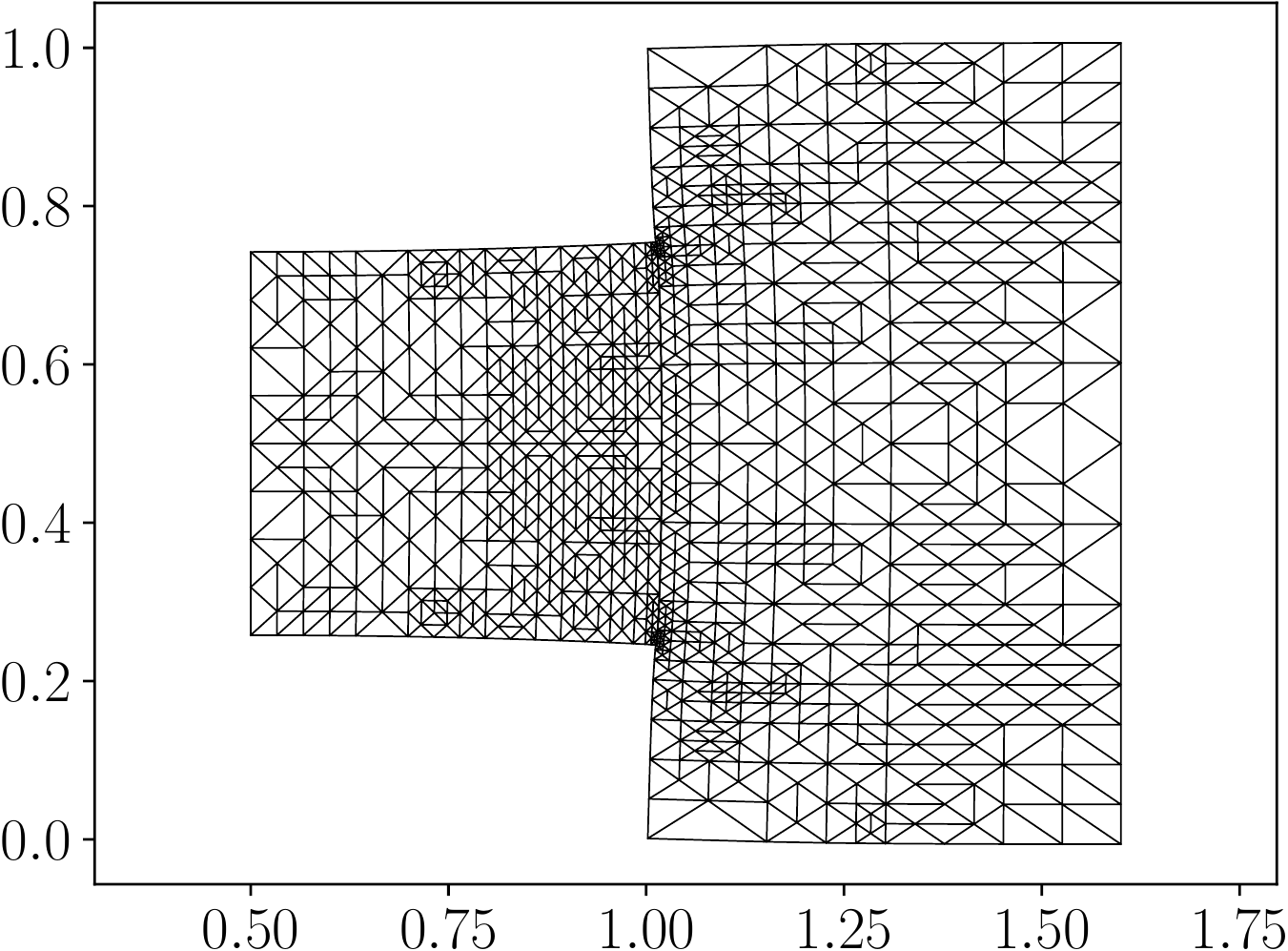}}
        \hspace{0.1cm}
        \raisebox{-0.48\height}{\includegraphics[width=0.50\textwidth]{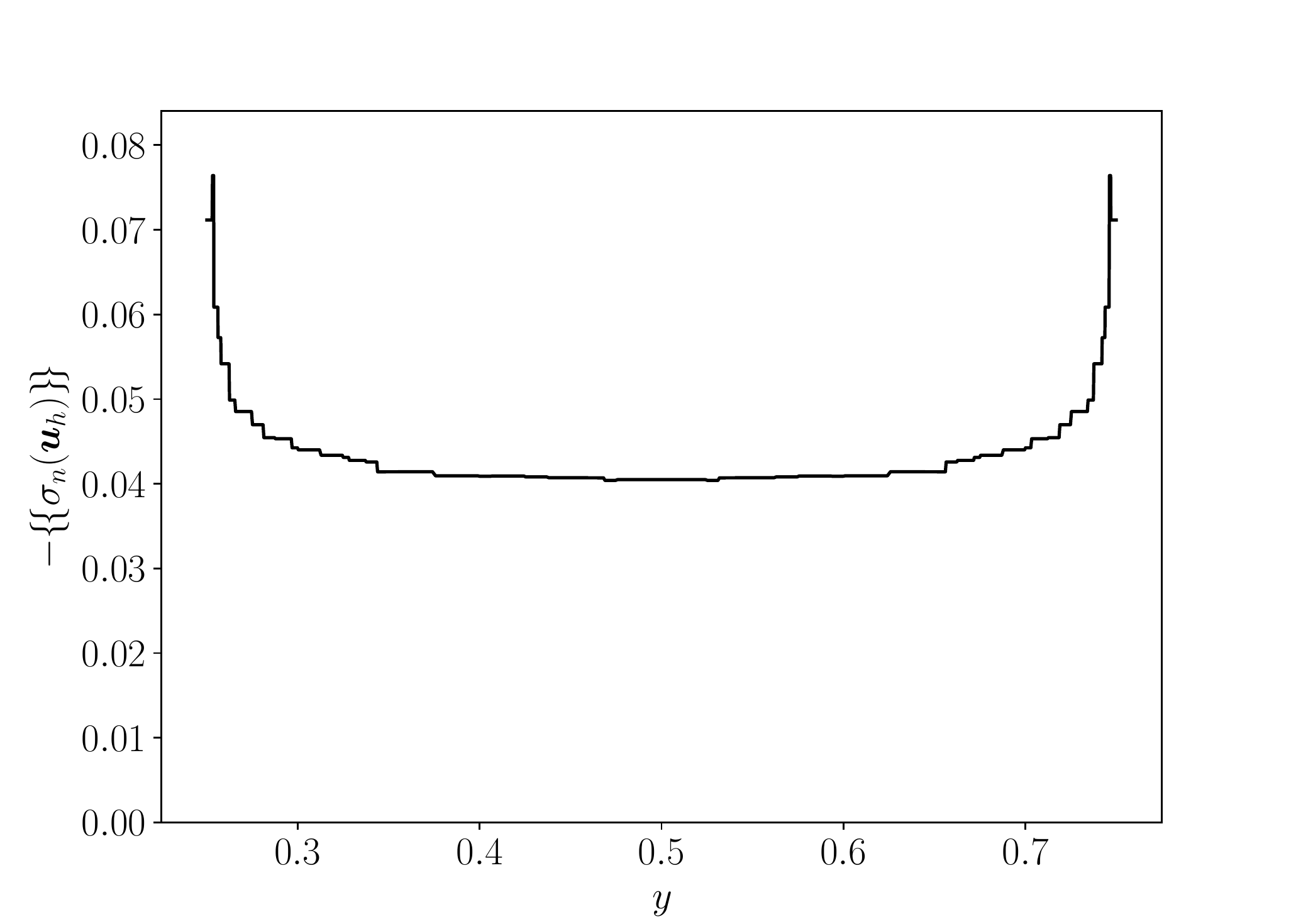}}
        \caption{$P_1$ after 8 adaptive refinements.}
        \label{fig:adaptseq}
    \end{subfigure}
    \begin{subfigure}[t]{\textwidth}
      \centering
      \raisebox{-0.5\height}{\includegraphics[width=0.4\textwidth]{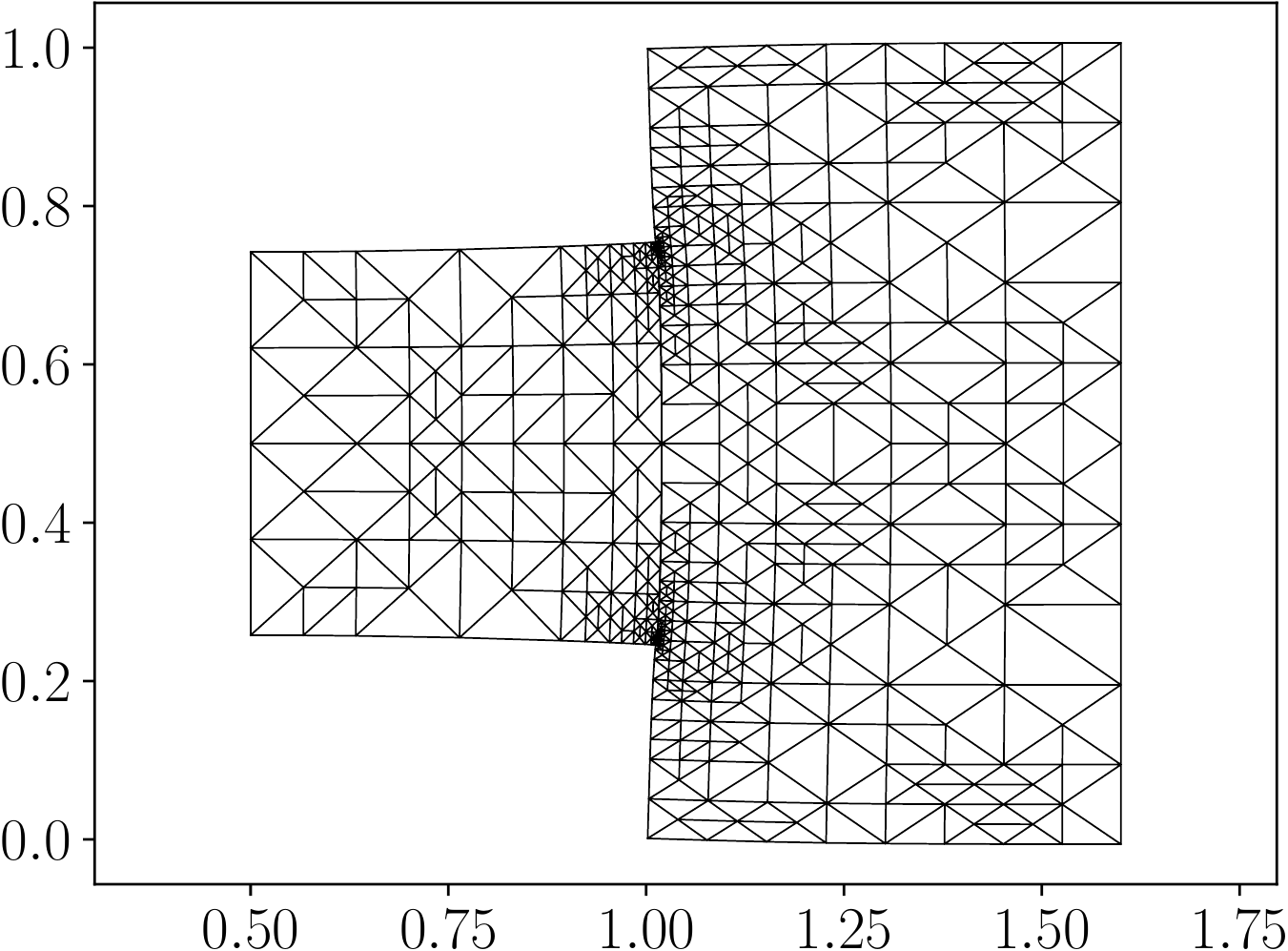}}
      \hspace{0.1cm}
      \raisebox{-0.48\height}{\includegraphics[width=0.5\textwidth]{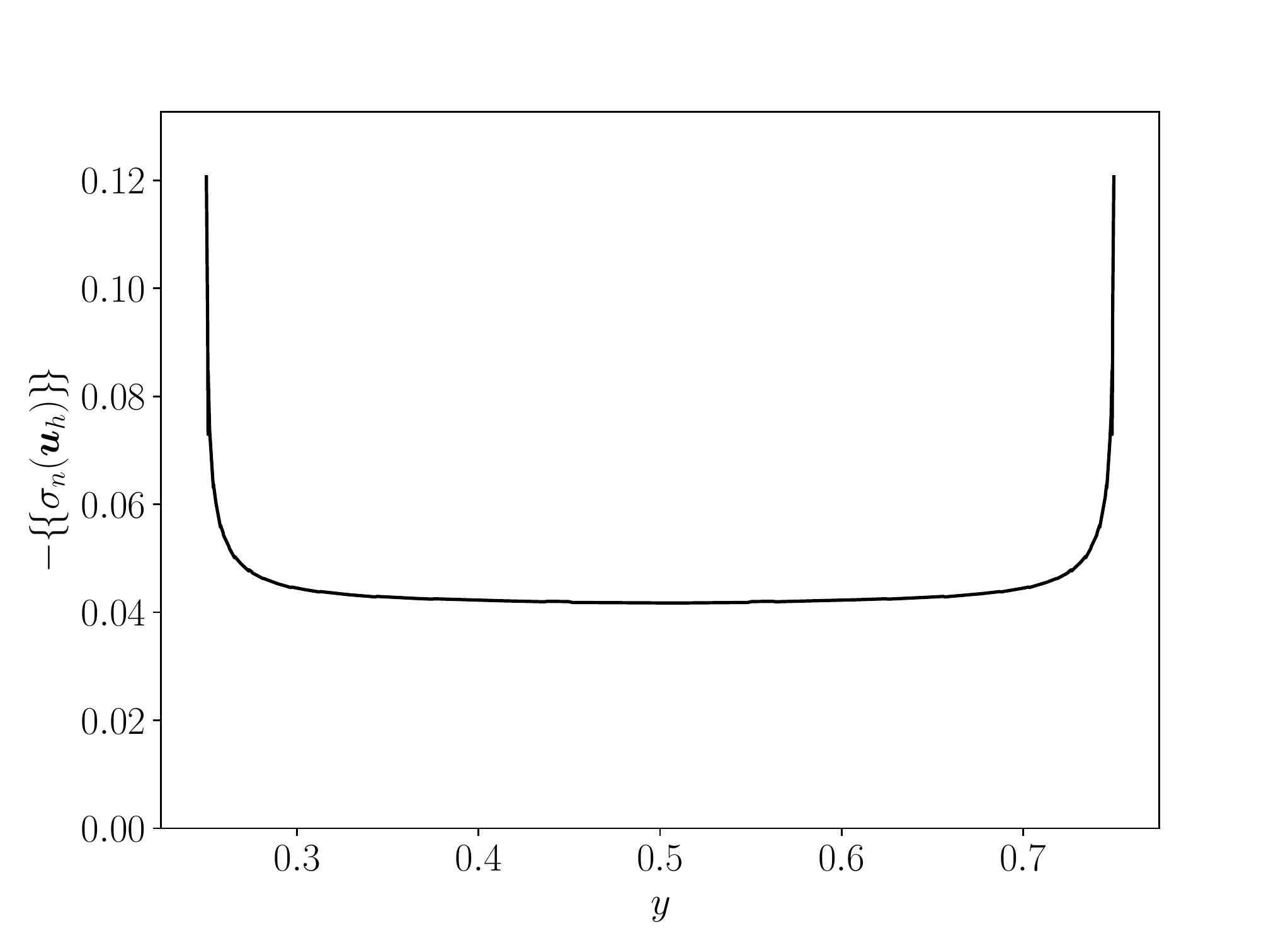}}
      \caption{$P_2$ after 8 adaptive refinements.}
      \label{fig:adaptseq1}
    \end{subfigure}
    \begin{subfigure}[t]{\textwidth}
      \vspace{0.5cm}
      \centering
      \begin{tikzpicture}[scale=0.9]
        \begin{axis}[
                xmode = log,
                ymode = log,
                xlabel = {$N$},
                ylabel = {$\eta+S$},
                grid = both,
                legend style={at={( 0.4,0.83)}, anchor=south west},
            ]
            \addplot table[only marks,x=ndofs,y=eta] {\adaptPII};
            \addplot table[only marks,x=ndofs,y=eta] {\adaptPI};
            \addplot table[only marks,x=ndofs,y=eta] {\unifPII};
            \addplot table[only marks,x=ndofs,y=eta] {\unifPI};

            \addplot[blue] table[y={create col/linear regression={y=eta}}] {\adaptPII};
            \xdef\adaptPIIcoeff{\pgfplotstableregressiona};

            \addplot[red] table[y={create col/linear regression={y=eta}}] {\adaptPI};
            \xdef\adaptPIcoeff{\pgfplotstableregressiona};

            \addplot[brown] table[y={create col/linear regression={y=eta}}] {\unifPII};
            \xdef\unifPIIcoeff{\pgfplotstableregressiona};

            \addplot[black] table[y={create col/linear regression={y=eta}}] {\unifPI};
            \xdef\unifPIcoeff{\pgfplotstableregressiona};

            \addlegendentry{Adaptive $P_2$, $\mathcal{O}(N^{\pgfmathprintnumber{\adaptPIIcoeff}})$}
            \addlegendentry{Adaptive $P_1$, $\mathcal{O}(N^{\pgfmathprintnumber{\adaptPIcoeff}})$}
            \addlegendentry{Uniform $P_2$, $\mathcal{O}(N^{\pgfmathprintnumber{\unifPIIcoeff}})$}
            \addlegendentry{Uniform $P_1$, $\mathcal{O}(N^{\pgfmathprintnumber{\unifPIcoeff}})$}
        \end{axis}
      \end{tikzpicture}
      \caption{The convergence rates of the total error estimator $\eta+S$ as a function of the number of degrees-of-freedom $N$.}
      \label{fig:adaptconv}
    \end{subfigure}
    \caption{Block against a block example.}
\end{figure}

\pgfplotstableread{
  ndofs eta
  272   0.011183207 
  398  5.7337908e-3 
  486  5.3547940e-3 
  684  4.1724822e-3 
  860  3.5950081e-3 
 1106  2.7538947e-3 
 1534  1.9910455e-3 
 2242  1.4039771e-3 
 2774  1.1818160e-3 
}\adaptPIIvII

\pgfplotstableread{
  ndofs eta
      82   0.016158947 
   132   0.017240179 
   180   0.016364739 
   288   0.012653927 
   454   0.011149878 
   626  9.0995024e-3 
   934  7.8357068e-3 
  1252  6.8366626e-3 
  1730  5.7220072e-3 
  2596  4.5997457e-3 
  3412  4.0745364e-3 
  4704  3.3075068e-3 
  6314  2.8390928e-3 
  7960  2.5532286e-3 
 10534  2.2768244e-3 
 14296  1.9378898e-3 
}\adaptPIvII

\begin{figure}[h!]
    \begin{subfigure}[t]{\textwidth}
        \centering
        \raisebox{-0.5\height}{\includegraphics[width=0.42\textwidth]{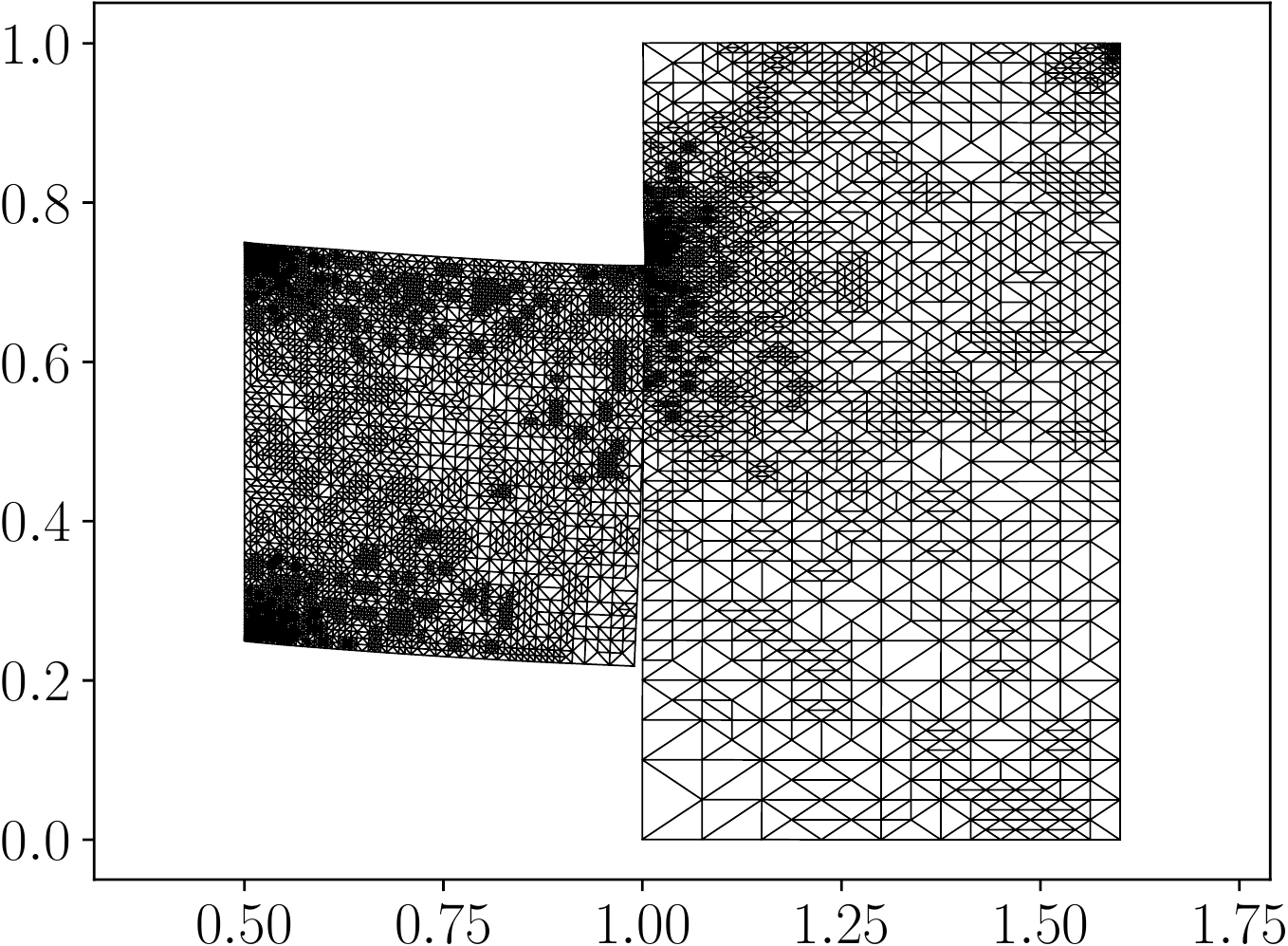}}
        \hspace{0.1cm}
        \raisebox{-0.52\height}{\includegraphics[width=0.43\textwidth]{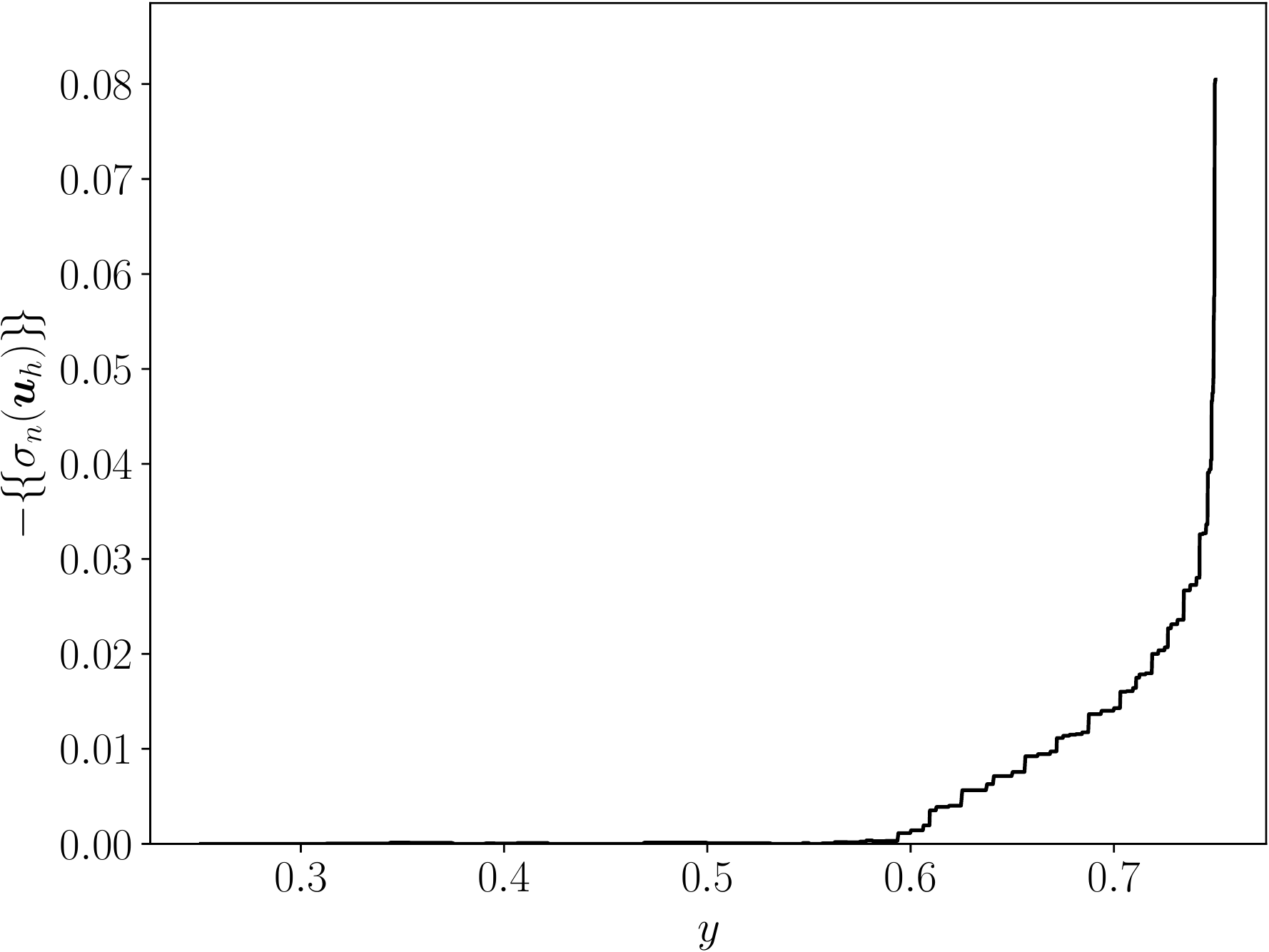}}
        \caption{$P_1$ after 15 adaptive refinements.}
            \label{fig:adaptseq2}
    \end{subfigure}
    \begin{subfigure}[t]{\textwidth}
      \centering
      \vspace{0.2cm}
      \raisebox{-0.5\height}{\includegraphics[width=0.42\textwidth]{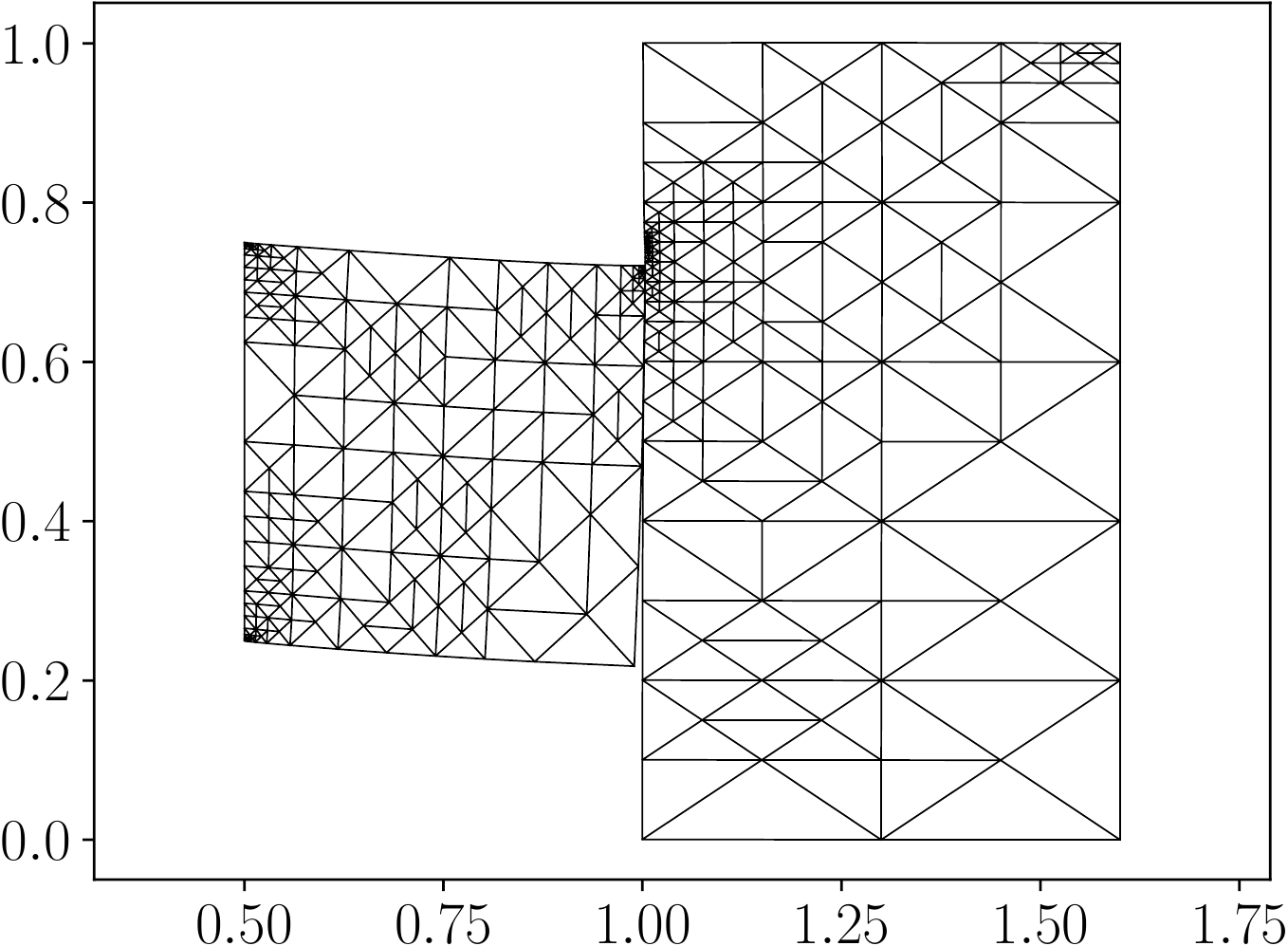}}
      \hspace{0.1cm}
      \raisebox{-0.52\height}{\includegraphics[width=0.43\textwidth]{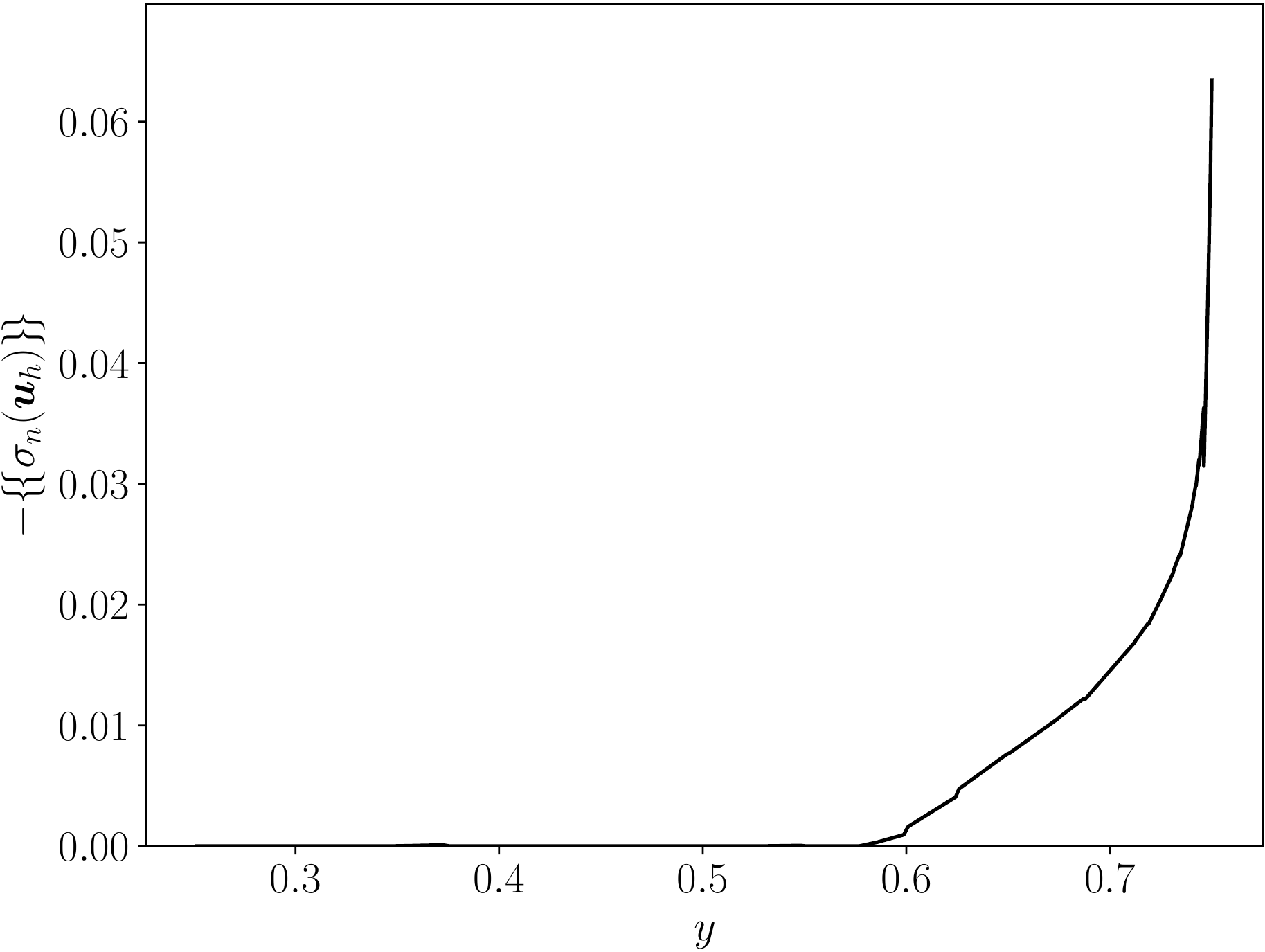}}
      \caption{$P_2$ after 8 adaptive refinements.}
      \label{fig:adaptseq21}
    \end{subfigure}
    \begin{subfigure}[t]{\textwidth}
      \vspace{0.5cm}
      \centering
      \begin{tikzpicture}[scale=0.9]
        \begin{axis}[
            xmode = log,
            ymode = log,
            xlabel = {$N$},
            ylabel = {$\eta+S$},
            grid = both,
            legend style={at={( 0.4,0.9)}, anchor=south west},
          ]
          \addplot table[only marks,x=ndofs,y=eta] {\adaptPIIvII};
          \addplot table[only marks,x=ndofs,y=eta] {\adaptPIvII};

          \addplot[blue] table[y={create col/linear regression={y=eta}}] {\adaptPIIvII};
          \xdef\adaptPIIcoeff{\pgfplotstableregressiona};

          \addplot[red] table[y={create col/linear regression={y=eta}}] {\adaptPIvII};
          \xdef\adaptPIcoeff{\pgfplotstableregressiona};

          \addlegendentry{Adaptive $P_2$, $\mathcal{O}(N^{\pgfmathprintnumber{\adaptPIIcoeff}})$}
          \addlegendentry{Adaptive $P_1$, $\mathcal{O}(N^{\pgfmathprintnumber{\adaptPIcoeff}})$}
        \end{axis}
      \end{tikzpicture}
      \caption{The convergence rates of the total error estimator $\eta+S$ as a function of the number of degrees-of-freedom $N$.}
          \label{fig:adaptconv2}
    \end{subfigure}
    \caption{Downward bending block example.}
\end{figure}

\pgfplotstableread{
  ndofs eta
272 0.012723237270218391
434 0.006000909076803074
558 0.004583480175542396
828 0.003936495876318481
1164 0.003160109055702916
1664 0.002010452328317928
2068 0.0015608484414634412
2556 0.0013024813566649249
3206 0.0010417489800942032
4104 0.0007613557854558165
5388 0.0006110541916976908
}\adaptPIIvIII

\pgfplotstableread{
  ndofs eta
272 0.012228070065093745
382 0.007225426461337643
506 0.0055660124532230255
670 0.0038723402971396234
872 0.002955825358875367
1084 0.00245695561213524
1504 0.0014878670784207713
1824 0.00126483538779943
2376 0.0009397145901848334
3014 0.0007748331056696119
3642 0.0006573441979683001
}\adaptPIIvIIII

\begin{figure}[h!]
    \begin{subfigure}[t]{\textwidth}
      \centering
      \includegraphics[width=0.46\textwidth]{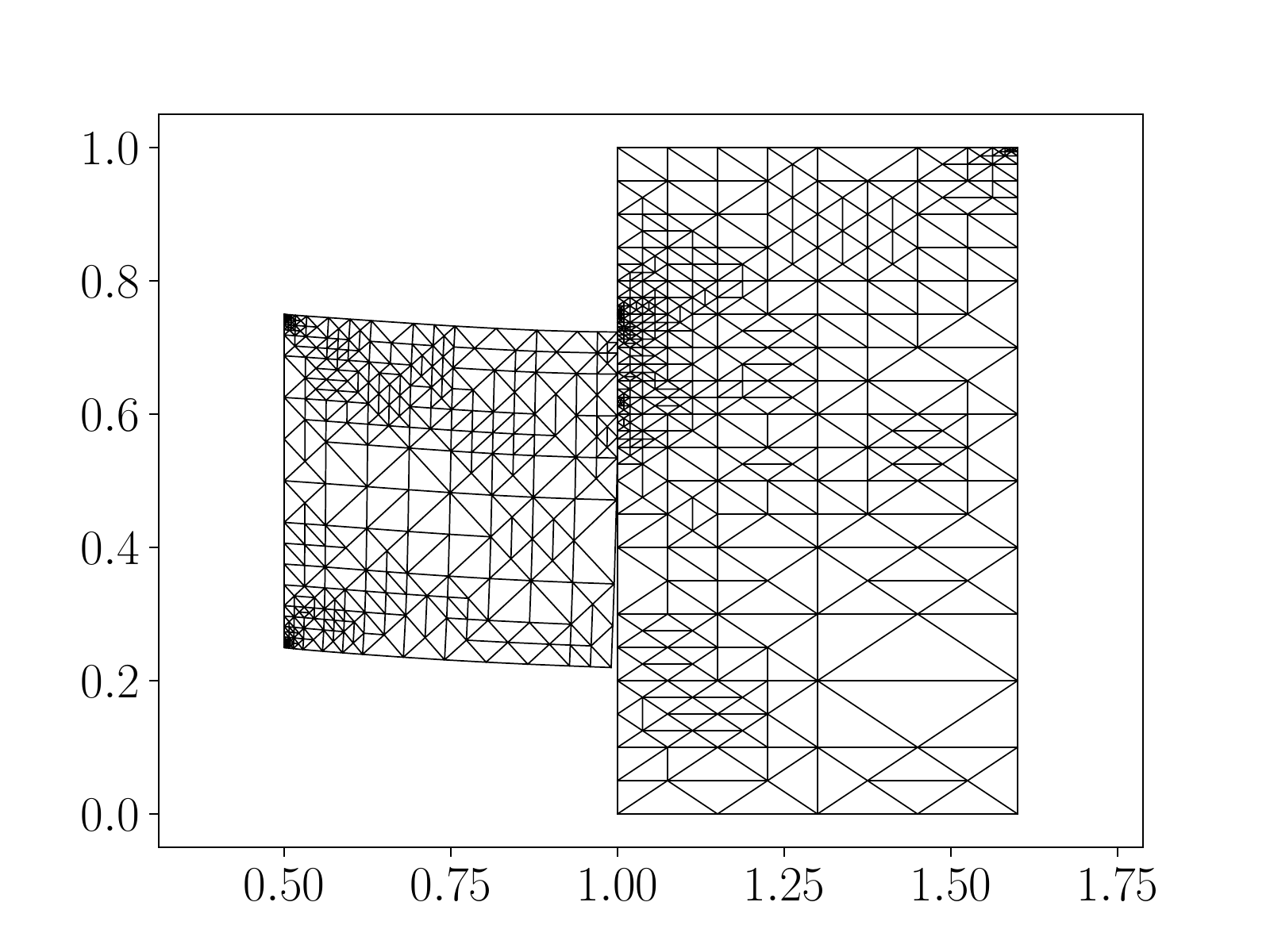}
      \hspace{-0.1cm}
      \includegraphics[width=0.455\textwidth]{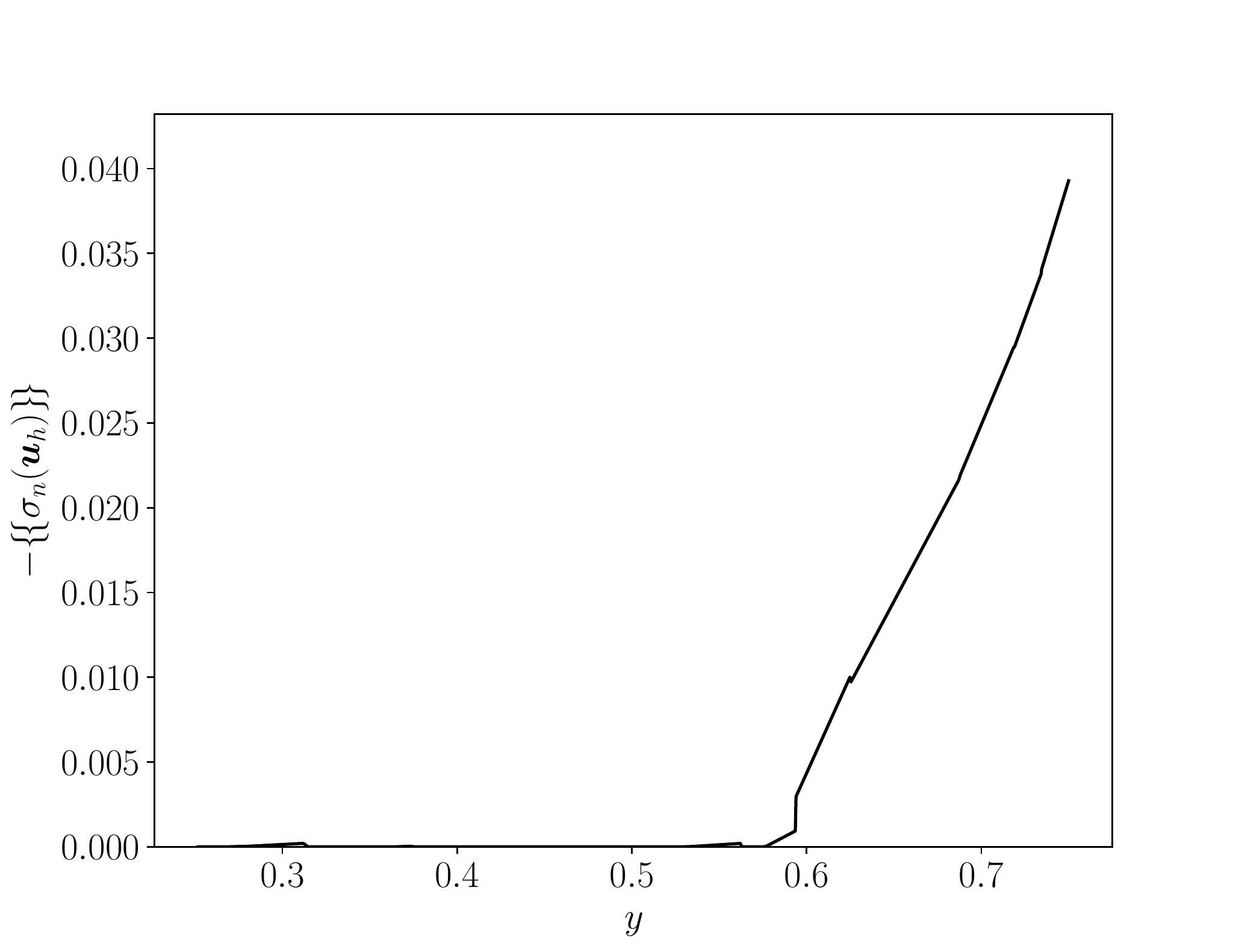}
      \caption{$P_2$ after 10 adaptive refinements with $E_2=100$.}
      \label{fig:adaptseq2E100}
    \end{subfigure}
    \begin{subfigure}[t]{\textwidth}
        \centering
        \includegraphics[width=0.46\textwidth]{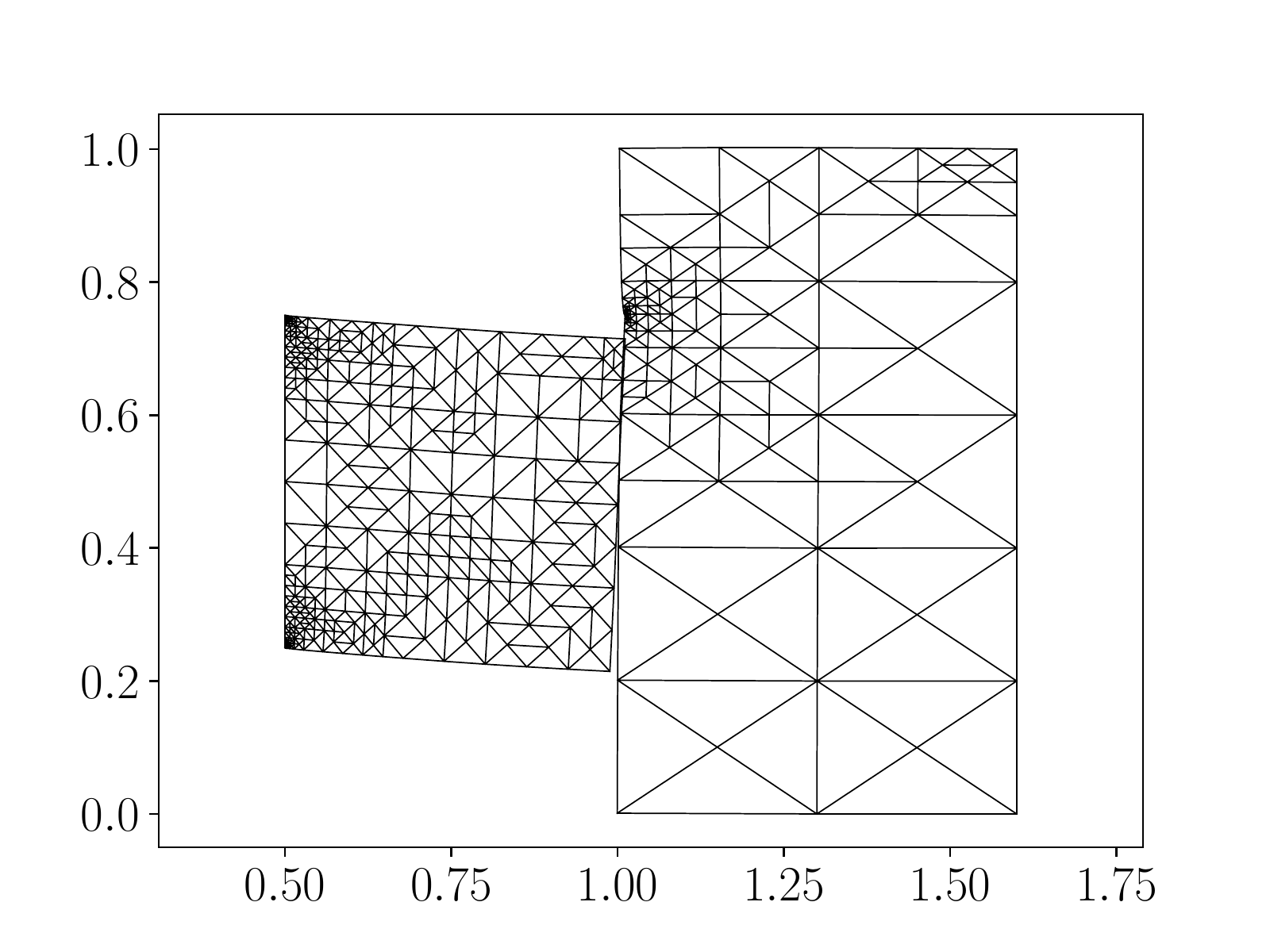}
        \hspace{-0.1cm}
        \includegraphics[width=0.46\textwidth]{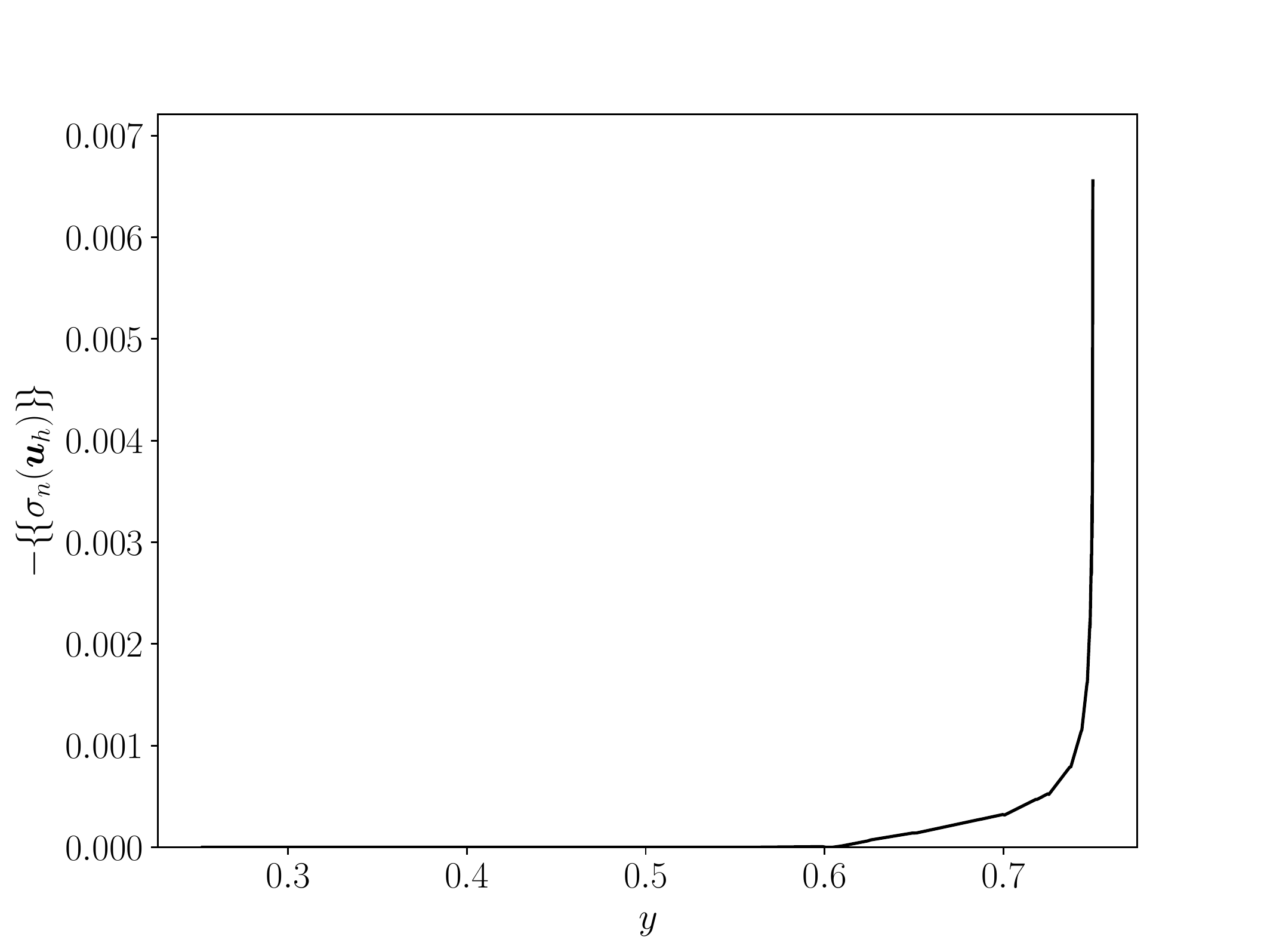}
        \caption{$P_2$ after 10 adaptive refinements with $E_2=0.01$.}
            \label{fig:adaptseq2E001}
    \end{subfigure}
    \begin{subfigure}[t]{\textwidth}
      \vspace{0.5cm}
      \centering
      \begin{tikzpicture}[scale=0.9]
        \begin{axis}[
            xmode = log,
            ymode = log,
            xlabel = {$N$},
            ylabel = {$\eta+S$},
            grid = both,
            legend style={at={( 0.2,0.9)}, anchor=south west},
          ]
          \addplot table[only marks,x=ndofs,y=eta] {\adaptPIIvIII};
          \addplot table[only marks,x=ndofs,y=eta] {\adaptPIIvIIII};

          \addplot[blue] table[y={create col/linear regression={y=eta}}] {\adaptPIIvIII};
          \xdef\adaptPIIcoeff{\pgfplotstableregressiona};

          \addplot[red] table[y={create col/linear regression={y=eta}}] {\adaptPIIvIIII};
          \xdef\adaptPIcoeff{\pgfplotstableregressiona};

          \addlegendentry{Adaptive $P_2$, $E_2=100$, $\mathcal{O}(N^{\pgfmathprintnumber{\adaptPIIcoeff}})$}
          \addlegendentry{Adaptive $P_2$, $E_2=0.01$, $\mathcal{O}(N^{\pgfmathprintnumber{\adaptPIcoeff}})$}
        \end{axis}
      \end{tikzpicture}
      \caption{The convergence rates of the total error estimator $\eta+S$ as a function of the number of degrees-of-freedom $N$.}
    \end{subfigure}
    \caption{Effect of a jump in the Young's modulus.}
    \label{fig:adaptconv2E}
\end{figure}

\pgfplotstableread{
  ndofs eta
272 0.012778081538569214
398 0.007015126598645964
558 0.004990357010696439
726 0.004230414250189424
940 0.003481634919647768
1444 0.002281647016953714
1894 0.00181000991207087
2508 0.0014709920471925792
3280 0.001165013679949896
4352 0.0008942708179408232
5766 0.0006686948407056319
}\adaptPIIalphaZONE

\pgfplotstableread{
  ndofs eta
272 0.013283856137114143
398 0.008811639759331278
486 0.0065464847046348374
684 0.00502158259569199
860 0.0041124725508012204
1106 0.0031563601691198926
1534 0.0022615538945431855
2242 0.0015785278124859684
2774 0.0013015962689135452
3474 0.0010155305472788823
4548 0.0007958543988012092
}\adaptPIIalphaZZONE

\pgfplotstableread{
  ndofs eta
272 0.013454436004200796
434 0.007791152977011452
594 0.005640766224680968
746 0.004398459873787459
1008 0.003613153124928436
1074 0.003322563685093166
1444 0.002578970397746524
1502 0.0024045567023091136
1952 0.0018571870932495823
2040 0.0017719330837594456
2454 0.0015114029856898529
}\adaptPIIalphaZZZONE

\begin{figure}[h!]
    \begin{subfigure}[t]{\textwidth}
      \centering
      \hspace{0.1cm}
      \raisebox{-0.5\height}{\includegraphics[width=0.46\textwidth]{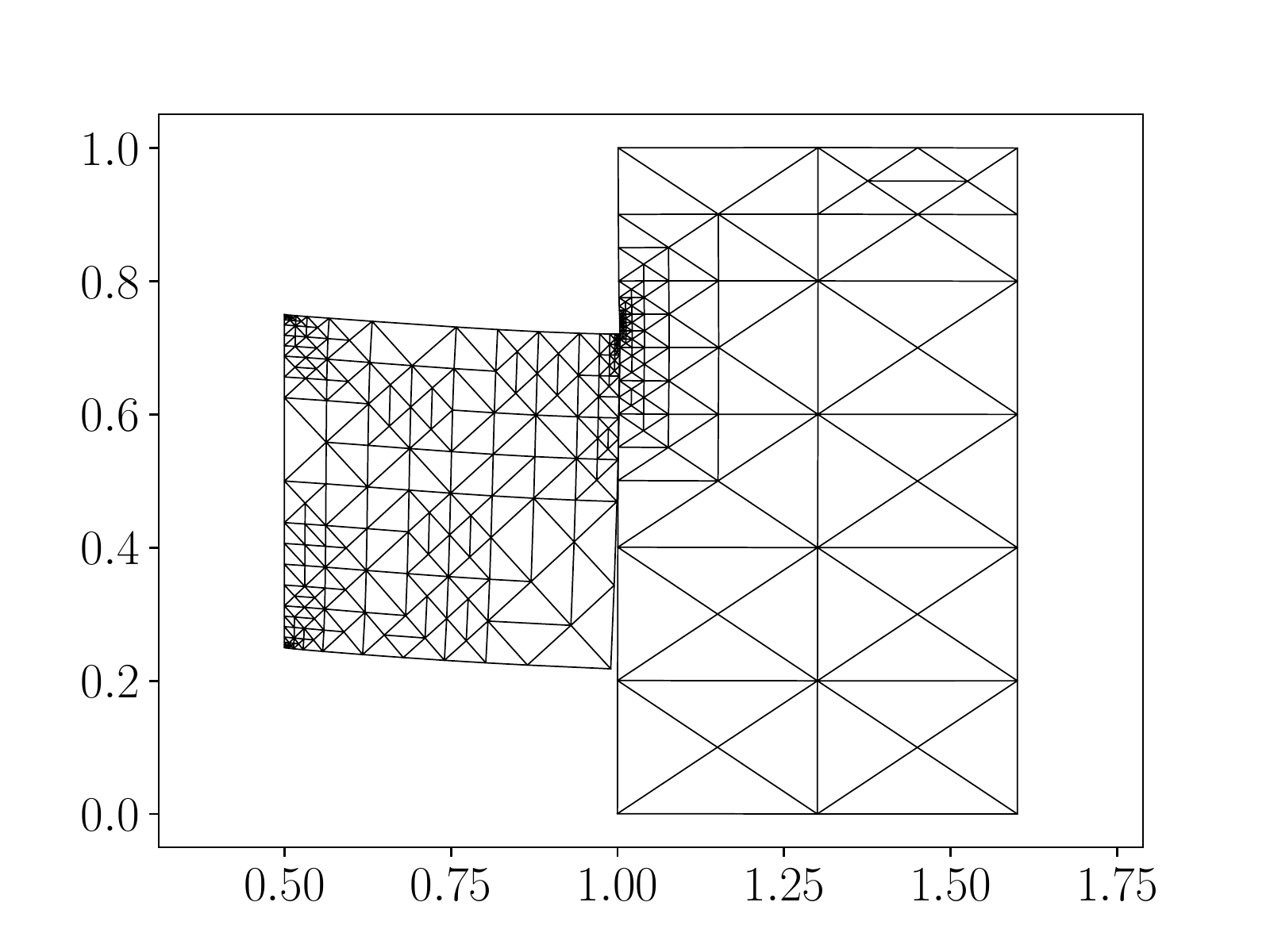}}\hspace{0.1cm}
      \raisebox{-0.5\height}{\includegraphics[width=0.46\textwidth]{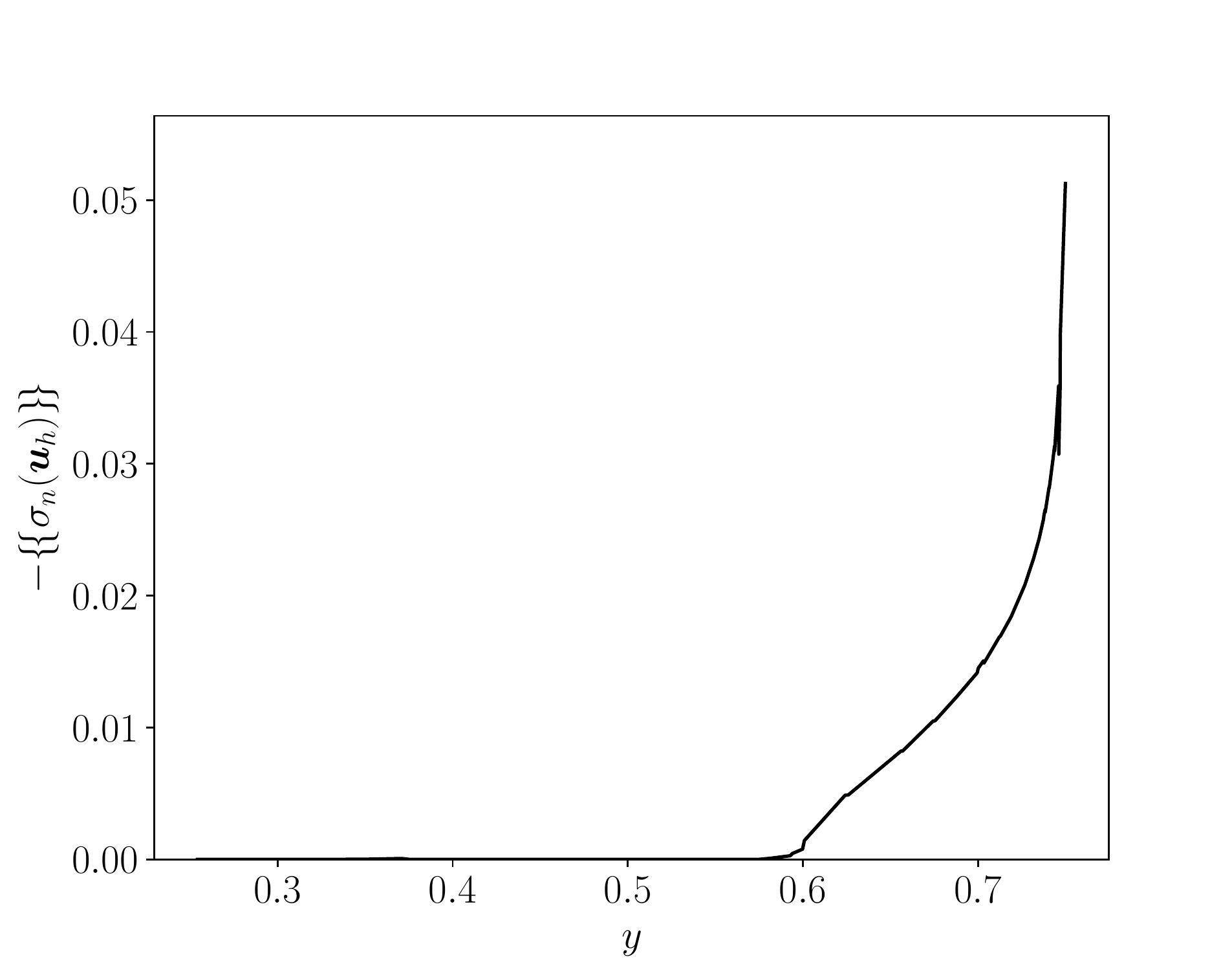}}
      \caption{$P_2$ after 10 adaptive refinements with $\alpha=0.0001$.}
      \label{fig:adaptseq2E100}
    \end{subfigure}
    \begin{subfigure}[t]{\textwidth}
      \centering
      \raisebox{-0.5\height}{\includegraphics[width=0.46\textwidth]{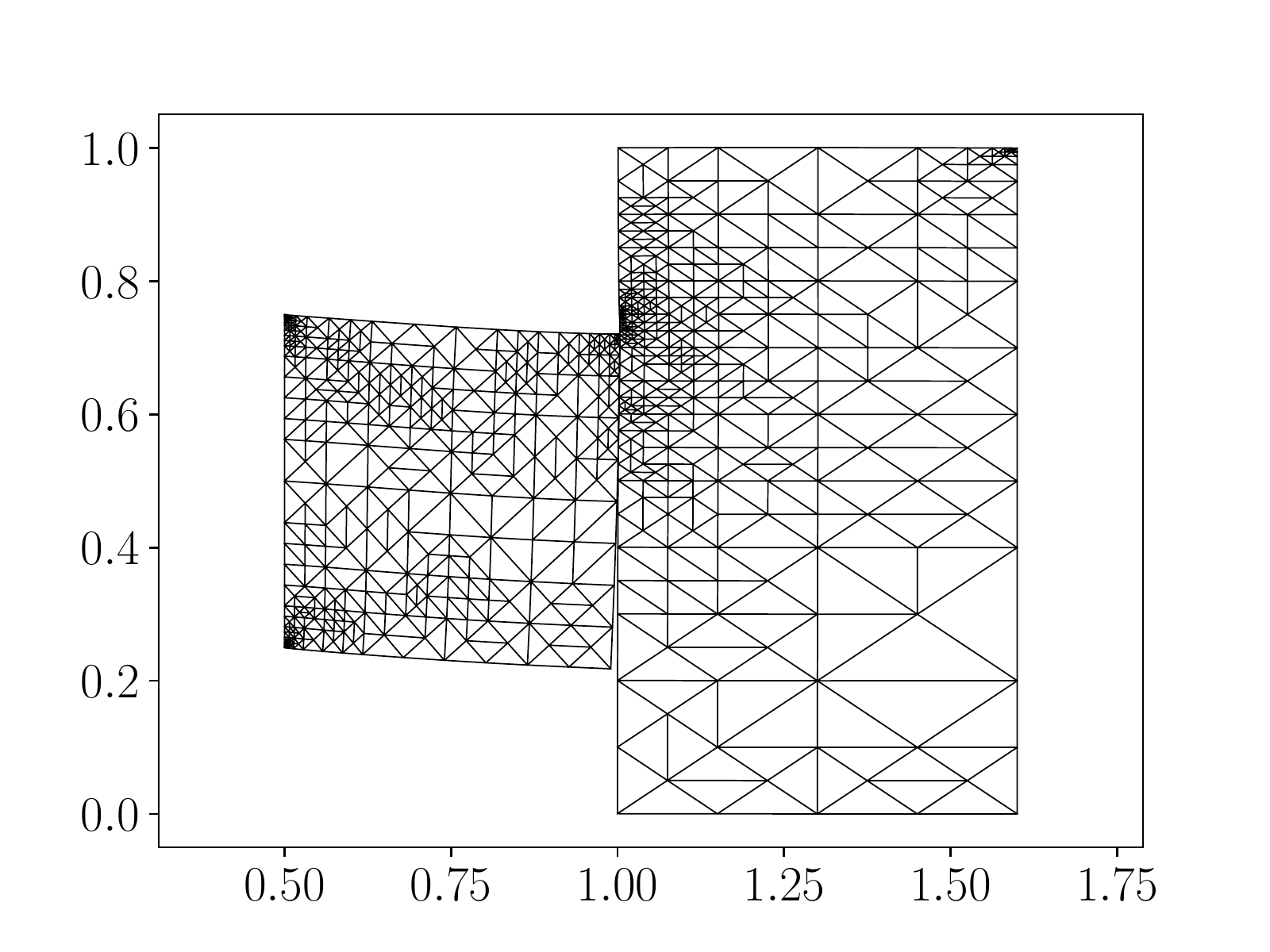}}
      \raisebox{-0.55\height}{\includegraphics[width=0.44\textwidth]{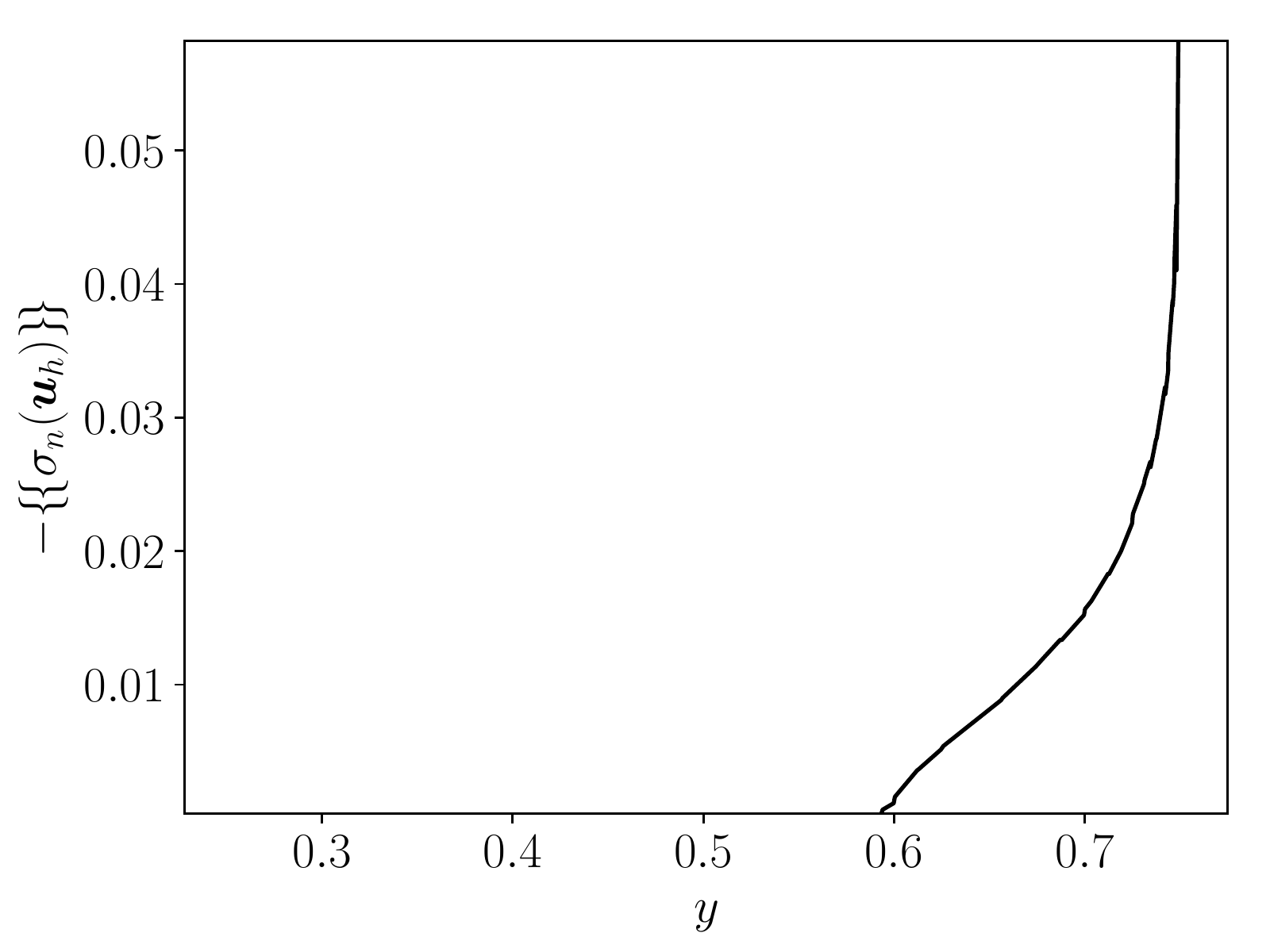}}
      \caption{$P_2$ after 10 adaptive refinements with $\alpha=0.01$.}
      \label{fig:adaptseq2E001}
    \end{subfigure}
    \begin{subfigure}[t]{\textwidth}
      \vspace{0.5cm}
      \centering
      \begin{tikzpicture}[scale=0.9]
        \begin{axis}[
            xmode = log,
            ymode = log,
            xlabel = {$N$},
            ylabel = {$\eta+S$},
            grid = both,
            legend style={at={( 0.2,0.9)}, anchor=south west},
          ]
          \addplot table[only marks,x=ndofs,y=eta] {\adaptPIIalphaZZZONE};
          \addplot table[only marks,x=ndofs,y=eta] {\adaptPIIalphaZZONE};
          \addplot table[only marks,x=ndofs,y=eta] {\adaptPIIalphaZONE};

          \addplot[blue] table[y={create col/linear regression={y=eta}}] {\adaptPIIalphaZZZONE};
          \xdef\adaptPIIcoeff{\pgfplotstableregressiona};

          \addplot[red] table[y={create col/linear regression={y=eta}}] {\adaptPIIalphaZZONE};
          \xdef\adaptPIcoeff{\pgfplotstableregressiona};

          \addplot[red] table[y={create col/linear regression={y=eta}}] {\adaptPIIalphaZONE};
          \xdef\adaptPZcoeff{\pgfplotstableregressiona};

          \addlegendentry{Adaptive $P_2$, $\alpha=0.0001$, $\mathcal{O}(N^{\pgfmathprintnumber{\adaptPIIcoeff}})$}
          \addlegendentry{Adaptive $P_2$, $\alpha=0.001$, $\mathcal{O}(N^{\pgfmathprintnumber{\adaptPIcoeff}})$}
          \addlegendentry{Adaptive $P_2$, $\alpha=0.01$, $\mathcal{O}(N^{\pgfmathprintnumber{\adaptPZcoeff}})$}
        \end{axis}
      \end{tikzpicture}
      \caption{The convergence rates of the total error estimator $\eta+S$ as a function of the number of degrees-of-freedom $N$.}
    \end{subfigure}
    \caption{Effect of changing the stabilisation parameter.}
          \label{fig:adaptconv2alpha}
\end{figure}

\begin{figure}[h!]
  \centering
  \includegraphics[width=0.49\textwidth]{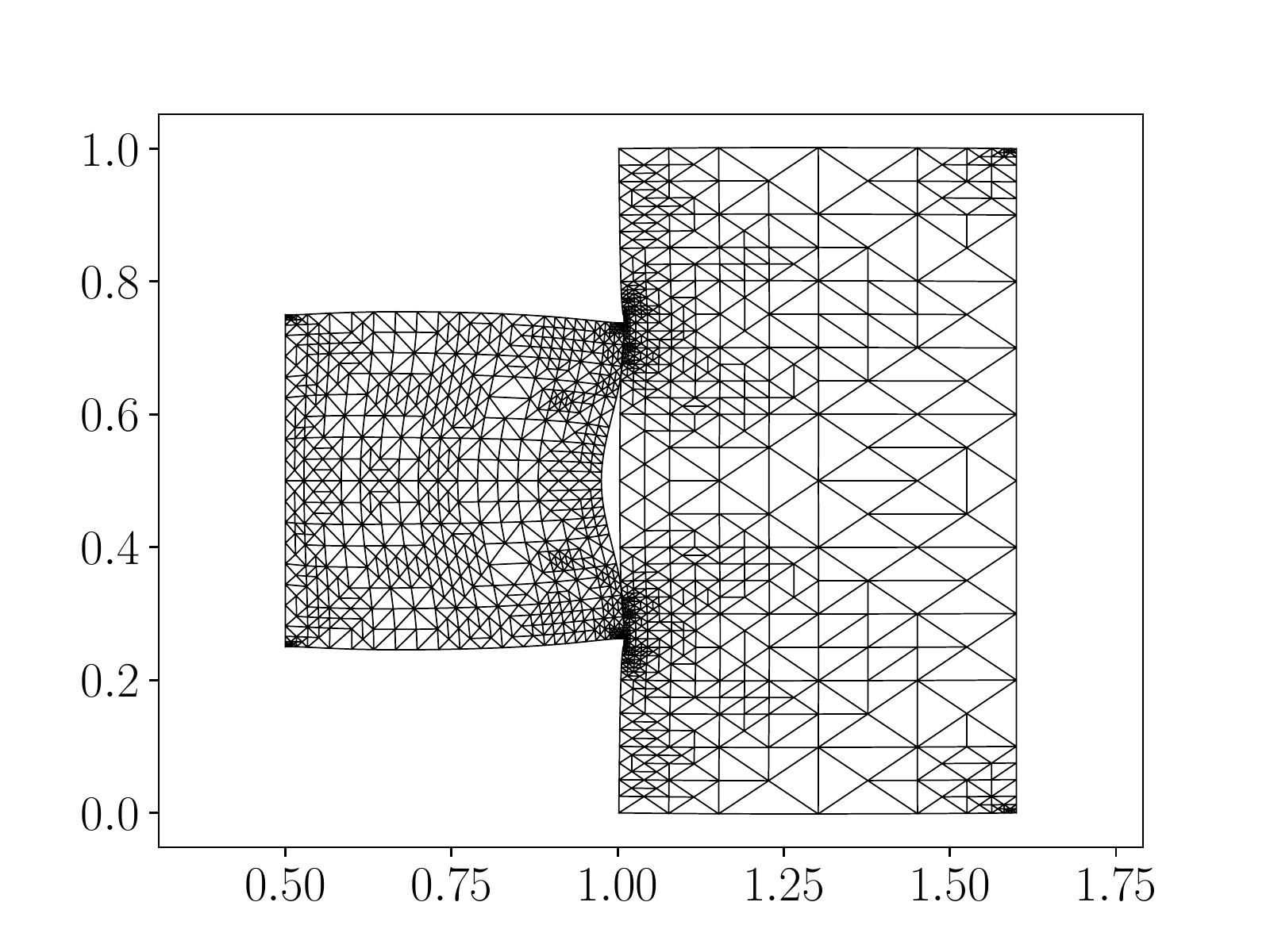}
  \includegraphics[width=0.49\textwidth]{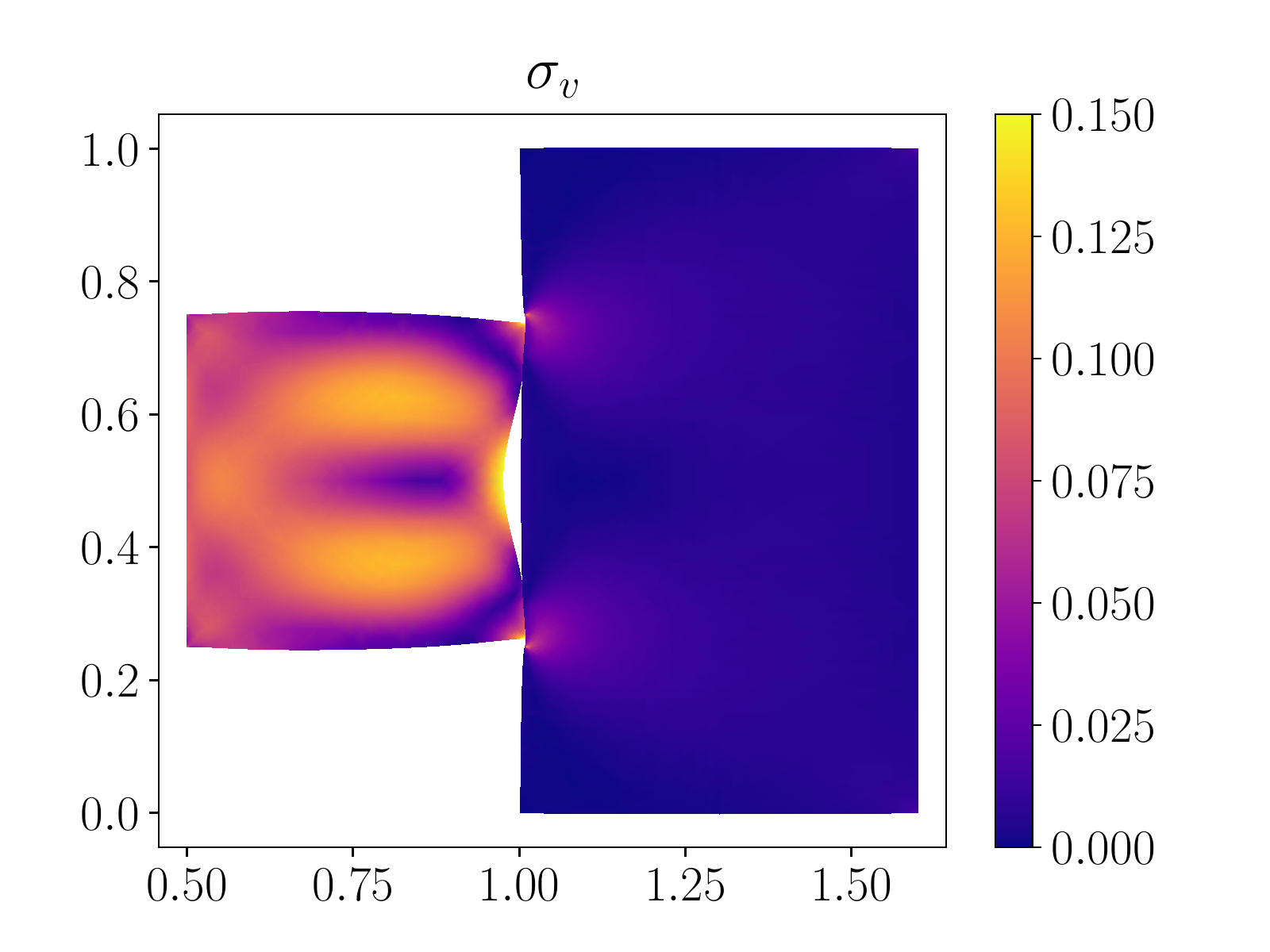}\\
        \includegraphics[width=0.49\textwidth]{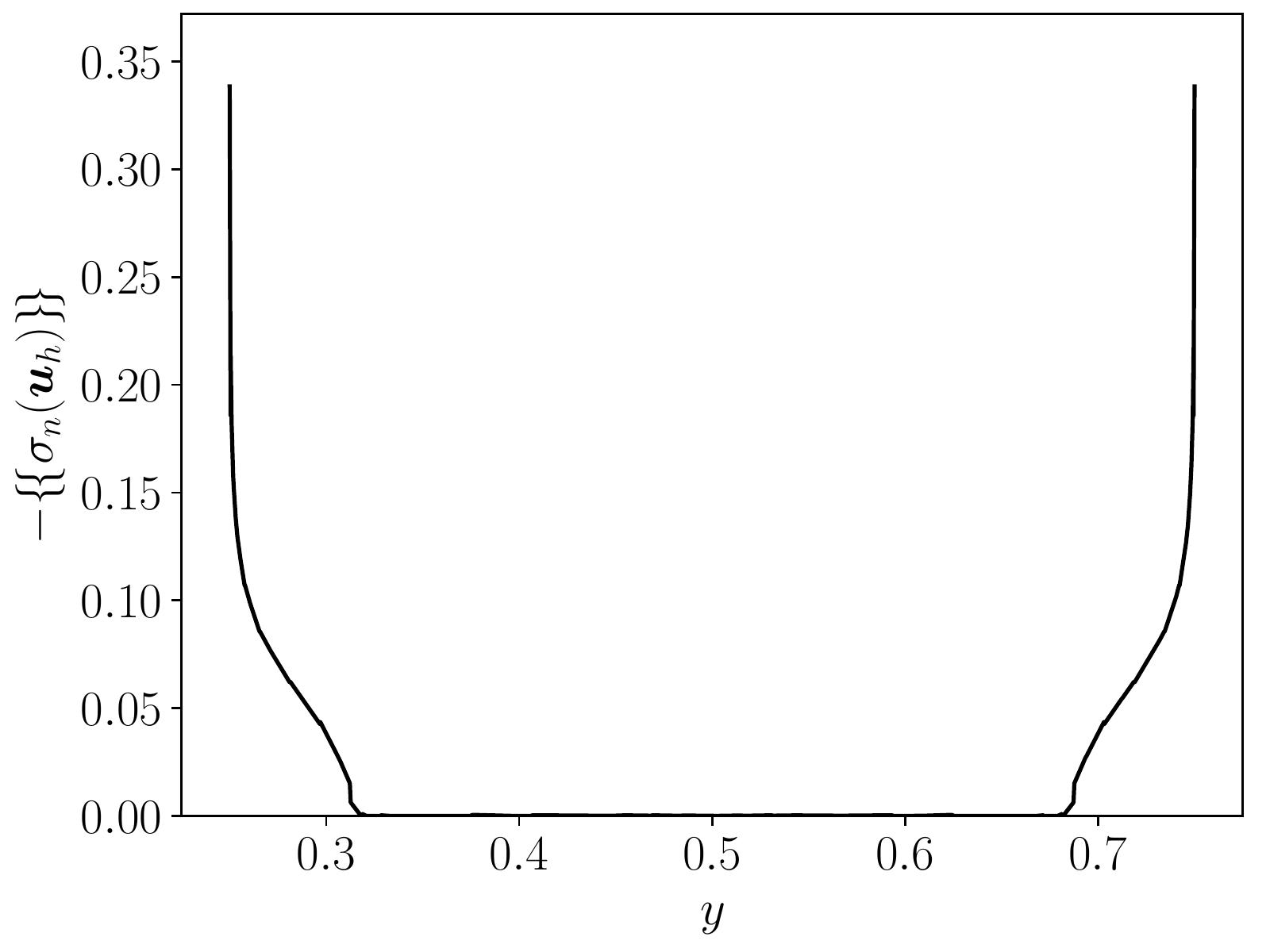}
        \caption{Example with a contact boundary consisting of two disjoint active sets, with von Mises stress $\sigma_v$ plotted in the top right figure.}
    \label{fig:final}
\end{figure}

\end{document}